\providecommand{\U}[1]{\protect \rule{.1in}{.1in}}
\newtheorem{theorem}{Theorem}[section]
\newtheorem{corollary}[theorem]{Corollary}
\newtheorem{definition}[theorem]{Definition}
\newtheorem{Theorem}{Theorem}
\theoremstyle{remark}
\newtheorem{remark}[theorem]{Remark}
\numberwithin{equation}{section}
\newcommand{\Z}{{\mathbb Z}}
\newcommand{\N}{{\mathbb N}}
\begin{document}
\title[Curvature equation and Painlev\'{e} VI equation]{Sovability of curvature equations with multiple singular sources on torus via Painlev\'{e} VI equations}
\author{Zhijie Chen}
\address{Department of Mathematical Sciences, Yau Mathematical Sciences Center, Tsinghua University, Beijing, China}
\email{zjchen2016@tsinghua.edu.cn}
\author{Ting-Jung Kuo}
\address{Department of Mathematics, National Taiwan Normal University, Taipei 11677,
Taiwan }
\email{tjkuo1215@ntnu.edu.tw}
\author{Chang-Shou Lin}
\address{Department of Mathematics, National Taiwan University, Taipei 10617, Taiwan }
\email{cslin@math.ntu.edu.tw}

\begin{abstract}
We study the curvature equation with multiple singular sources on a torus
\[\Delta u+e^{u}=8\pi \sum_{k=0}^{3}n_{k}\delta_{\frac{\omega_{k}}{2}}%
+4\pi \left(  \delta_{p}+\delta_{-p}\right) \quad \text{ on }\;E_{\tau}:=\mathbb{C}/(\mathbb Z+\mathbb{Z}\tau),\]
where $n_k\in\mathbb N$ and $\delta_a$ denotes the Dirac measure at $a$. This is known as a critical case for which the apriori estimate does not hold, and the existence of solutions has been a long-standing problem. In this paper, by establishing a deep connection with Painlev\'{e} VI equations, we show that the existence of even solutions (i.e. $u(z)=u(-z)$) depends on the location of the singular point $p$, and we give a sharp criterion of $p$ in terms of Painlev\'{e} VI equations.
\end{abstract}
\maketitle

\section{Introduction}

Let $\tau \in \mathbb{H}=\left \{  \tau\in\mathbb C|\operatorname{Im}\tau>0\right \}$, $\Lambda_{\tau}=\mathbb{Z}+\mathbb{Z}\tau$, and denote
$$\omega_{0}=0,\quad\omega_{1}=1,\quad\omega_{2}=\tau,\quad\omega_{3}=1+\tau.$$Let $E_{\tau}:=\mathbb{C}/\Lambda_{\tau}$ be a flat torus in the
plane and $E_{\tau}[2]:=\{ \frac{\omega_{k}}{2}|k=0,1,2,3\}+\Lambda
_{\tau}$ be the set consisting of the lattice points and half periods
in $E_{\tau}$. For $z\in\mathbb{C}$ we denote $[z]:=z \ (\text{mod}\ \Lambda_{\tau}) \in E_{\tau}$.
For a point $[z]$ in $E_{\tau}$ we often write $z$ instead of $[z]$ to
simplify notations when no confusion arises.

Let $\wp(z)=\wp(z;\tau)$ be the
Weierstrass elliptic function with periods $\Lambda_{\tau}$, defined by \[
\wp(z)=\wp(z;\tau)=\frac{1}{z^{2}}+\sum_{\omega \in \Lambda_{\tau}\backslash \left \{
0\right \}  }\left(  \frac{1}{(z-\omega)^{2}}-\frac{1}{\omega^{2}}\right).
\] Let $e_k(\tau):=\wp(\frac{\omega_k}{2};\tau)$, $k=1,2,3$. Then it is well known that
\[\wp'(z;\tau)^2=4\prod_{k=1}^3(\wp(z;\tau)-e_k(\tau))=4\wp(z;\tau)^3-g_2(\tau)\wp(z;\tau)-g_3(\tau),\]
where $g_2(\tau), g_3(\tau)$ are invariants of the elliptic curve $E_{\tau}$.
Let $\zeta(z)=\zeta(z;\tau):=-\int^{z}\wp(\xi;\tau)d\xi$
be the Weierstrass zeta function with two quasi-periods $\eta_k=\eta_{k}(\tau)$:
\begin{equation}
\eta_k=\eta_{k}(\tau):=2\zeta(\tfrac{\omega_{k}}{2} ;\tau)=\zeta(z+\omega_{k} ;\tau)-\zeta(z;\tau),\quad k=1,2.
\label{40-2}
\end{equation}
 Notice that $\zeta(z)$ is an odd
meromorphic function with simple poles at $\Lambda_{\tau}$.

In this paper, we study the curvature equation with multiple singular sources
\begin{equation}\label{mfe-p}
\Delta u+e^{u}=8\pi \sum_{k=0}^{3}n_{k}\delta_{\frac{\omega_{k}}{2}}%
+4\pi \left(  \delta_{p}+\delta_{-p}\right) \quad \text{ on }\;E_{\tau},
\end{equation}
where $\delta_x$ denotes the Dirac measure at $x\in E_{\tau}$, $p\in E_{\tau}\setminus E_{\tau}[2]$ (i.e. $p\neq \frac{\omega_k}{2}$ in $E_{\tau}$),  and $n_{k}\in\N$ for all $k$.
Geometrically, \eqref{mfe-p} arises from the prescribing Gaussian curvature problem. A solution $u$ of \eqref{mfe-p} leads to a metric $g=\frac12 e^uds^2$ with constant Gaussian curvature $+1$ acquiring conic singularities at $\frac{\omega_k}{2}$ and $\pm p$. Physically, \eqref{mfe-p}
appears in statistical physics as the equation for the mean field
limit of the Euler flow in Onsager's vortex model (cf. \cite{CLMP}), hence
also called a mean field equation. It is also related to the
self-dual condensates of the Chern-Simons-Higgs model in superconductivity.
We refer the readers to \cite{CLW,CL-1,CL-2,CL-3,CKL2,CL-AJM,EG-2015,EG,LW,LW2,LY,MR,NT1} and references therein for
recent developments on related subjects of (\ref{mfe-p}). Remark that if $p\in E_{\tau}[2]$, then \eqref{mfe-p} reduces to
\begin{equation}\label{mfe-0}
\Delta u+e^{u}=8\pi \sum_{k=0}^{3}n_{k}\delta_{\frac{\omega_{k}}{2}} \quad \text{ on }\;E_{\tau},
\end{equation}
with $\max_{k}n_k\geq 1$, which has been studied in \cite{CLW,CKL2,CL-AJM,EG-2015,EG,LW,LW2}. In particular, it turns out that whether \eqref{mfe-0} has even solutions (i.e. $u(z)=u(-z)$) or not depends on the choice of $\tau$, and those $\tau$'s such that \eqref{mfe-0} has even solutions can be characterized as zeros of certain pre-modular forms; see \cite{CKL2,LW2} for details.

In this paper, we focus on the case $p\notin E_{\tau}[2]$.
From the PDE point of view, since the apriori estimate does not hold for \eqref{mfe-p} (see Remark \ref{rmk3-4} below), neither the Leray-Schauder degree method \cite{CL-2,CL-3} nor the variational method \cite{MR} works, and 
whether solutions of \eqref{mfe-p} exist or not remains completely open. The purpose of this paper is to show that \emph{the existence of even solutions of \eqref{mfe-p} depends on the location of the singular point $p$, and to give a sharp criterion of $p$ in terms of Painlev\'{e} VI equations}.

The well-known Painlev\'{e} VI equation (PVI) is a second order nonlinear ODE with complex variables and four free parameters
$(\alpha,\beta,\gamma,\delta)$:
\begin{align}
\frac{d^{2}\lambda}{dt^{2}}= &  \frac{1}{2}\left(  \frac{1}{\lambda}+\frac
{1}{\lambda-1}+\frac{1}{\lambda-t}\right)  \left(  \frac{d\lambda}{dt}\right)
^{2}-\left(  \frac{1}{t}+\frac{1}{t-1}+\frac{1}{\lambda-t}\right)
\frac{d\lambda}{dt}\nonumber \\
&  +\frac{\lambda(\lambda-1)(\lambda-t)}{t^{2}(t-1)^{2}}\left[  \alpha
+\beta \frac{t}{\lambda^{2}}+\gamma \frac{t-1}{(\lambda-1)^{2}}+\delta
\frac{t(t-1)}{(\lambda-t)^{2}}\right],\quad t\in\mathbb C.\label{46}%
\end{align}
Historically, PVI was originated from the research on complex ODEs from the
middle of the 19th century up to early 20th century, led by many famous
mathematicians including Painlev\'{e} and his school. The aim is to classify
those nonlinear ODEs whose solutions have the so-called \textit{Painlev\'{e}
property}, namely any solution
has neither movable branch points nor movable essential singularities.

Due to increasingly important roles in both mathematics and physics, PVI has
been widely studied since the early 1970's. See e.g. 
\cite{Bob-E,Dubrovin-Mazzocco,Guzzetti,Hit1,GP,Lisovyy-Tykhyy,Okamoto1,Takemura}
and references therein. Compared with all these
references dealing with PVI, the purpose of this paper is to explore the deep
connection between PVI and the curvature equation \eqref{mfe-p}.

From the Painlev\'{e} property, any solution $\lambda(t)$ of (\ref{46}) is a
multi-valued meromorphic function in $\mathbb{C}\backslash\{
0,1\}$. Therefore, it is natural to lift (\ref{46}) to the covering
space $\mathbb{H}$ of $\mathbb{C}\backslash\{0,1\}  $ by the
following transformation:%
\begin{equation}
t(\tau)  =\frac{e_{3}(\tau)-e_{1}(\tau)}{e_{2}(\tau)-e_{1}(\tau
)}\quad\text{ and }\quad\lambda(t)=\frac{\wp(p(\tau);\tau)-e_{1}(\tau)}{e_{2}%
(\tau)-e_{1}(\tau)}.\label{tr}%
\end{equation} 
Then $\lambda(t)$ solves PVI if and only if $p(\tau)$ satisfies the following
\emph{elliptic form} of PVI (EPVI):
\begin{equation}
\frac{d^{2}p(\tau)}{d\tau^{2}}=\frac{-1}{4\pi^{2}}\sum_{k=0}^{3}\alpha_{k}%
\wp^{\prime}\left( p(\tau)+\tfrac{\omega_{k}}{2};
\tau \right)  , \label{124}%
\end{equation}
where $\wp^{\prime}(z;\tau)=\frac{d}{dz}\wp(z;\tau)$ and
$
(\alpha_{0},\alpha_{1},\alpha_{2},\alpha_{3})  =(
\alpha,-\beta,\gamma,\tfrac{1}{2}-\delta)$.
See e.g. \cite{Babich-Bordag} for the proof. The advantage of (\ref{124}) is that $\wp(p(\tau);\tau)$ is single-valued for
$\tau \in \mathbb{H}$, although $p(\tau)$ has branch points at those $\tau_{0}$
such that $p(\tau_{0})\in E_{\tau_{0}}[2]$.

\begin{remark}
\label{identify}Clearly for any $m_{1},m_{2}\in \mathbb{Z}$, $\pm p(\tau
)+m_{1}+m_{2}\tau$ is also a solution of the elliptic form (\ref{124}). Since
they all give the same solution $\lambda(t)$ of PVI via \eqref{tr}, we
always identify all these $\pm p(\tau)+m_{1}+m_{2}\tau$ with the
same one $p(\tau)$.
\end{remark}

From now on we consider PVI with parameters
\begin{align}
(\alpha,\beta,\gamma,\delta)=  &  \left(  \tfrac{1}{2}(n_{0}+\tfrac{1}{2}%
)^{2},\text{ }-\tfrac{1}{2}(n_{1}+\tfrac{1}{2})^{2},\text{ }\tfrac{1}{2}%
(n_{2}+\tfrac{1}{2})^{2},\right. \nonumber \\
&  \left.  \tfrac{1}{2}-\tfrac{1}{2}(n_{3}+\tfrac{1}{2})^{2}\right),\quad n_{k}\in \mathbb{N}\text{ for all }k, \label{parameter}%
\end{align}
and denoted it by PVI$_{\mathbf{n}}$, where $\mathbf{n}=(n_0,n_1,n_2,n_3)$,
or equivalently
EPVI with parameters
\begin{equation}
\alpha_{k}=\tfrac{1}{2}(n_{k}+\tfrac{1}{2})^{2},\quad n_{k}\in
\mathbb{N}\text{ for all }k,
\label{parameter0}%
\end{equation}
and denoted it by EPVI$_{\mathbf{n}}$.

First we recall Hitchin's famous formula for the case $\mathbf{n}=\mathbf{0}:=(0,0,0,0)$. For any $(r,s)  \in \mathbb{C}^{2}\backslash \frac
{1}{2}\mathbb{Z}^{2}$, where $\frac12\Z^2=\{(r,s) : 2r, 2s\in\Z\}$, we define
\begin{align}\label{z-rs}Z_{r,s}(\tau):=&\zeta(r+s\tau;\tau)-r\eta_1(\tau)-s\eta_2(\tau),\end{align}
and $p_{r,s}^{\mathbf{0}}(\tau
)$ by
\begin{equation}
\wp(p_{r,s}^{\mathbf{0}}(\tau);\tau):=\wp(r+s\tau;\tau)+\frac{\wp^{\prime
}(r+s\tau;\tau)}{2Z_{r,s}(\tau)  }. \label{513-1}%
\end{equation}

\begin{remark}
It is well known that $\wp(\cdot;\tau): E_{\tau}\to \mathbb{CP}^1$ is a double cover, i.e. for any $c\in \mathbb{CP}^1$, there is a unique pair $\pm z_c\in E_{\tau}$ such that $\wp(\pm z_{c};\tau)=c$. Consequently, for any $(r,s)\in \mathbb{C}^{2}\backslash \frac{1}{2}\mathbb{Z}^{2}$, there exists a unique pair $\pm p_{r,s}^{\mathbf{0}}(\tau)\in E_{\tau}$ such that \eqref{513-1} holds.
Therefore, $p_{r,s}^{\mathbf{0}}(\tau)$ is well-defined up to a sign.
\end{remark}

In \cite{Hit1} Hitchin proved the following remarkable result for EPVI$_{\mathbf{0}}$.

\begin{Theorem} \cite{Hit1} \label{thm-Hitchin} For any $(
r,s)  \in \mathbb{C}^{2}\backslash \frac{1}{2}\mathbb{Z}^{2}$\textit{,
}$p_{r,s}^{\mathbf{0}}(\tau)$ given by \eqref{513-1} is a solution to
EPVI$_{\mathbf{0}}$;
or equivalently, $\lambda_{r,s}^{\mathbf{0}}(t):=
\frac{\wp(p_{r,s}^{\mathbf{0}}(\tau);\tau)-e_{1}(  \tau)  }%
{e_{2}(\tau)-e_{1}(\tau)}$ via \eqref{513-1} is a solution to PVI$_{\mathbf{0}}$.\end{Theorem}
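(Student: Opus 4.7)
The plan is to verify, by implicit differentiation of \eqref{513-1}, that $p(\tau):=p_{r,s}^{\mathbf{0}}(\tau)$ solves EPVI$_{\mathbf{0}}$, i.e.\ \eqref{124} with all $\alpha_{k}=1/8$. Write $a=a(\tau):=r+s\tau$ throughout.

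\emph{Step 1: reformulate both sides.} First I simplify the EPVI$_{\mathbf{0}}$ target. Comparing poles at $E_{\tau}[2]$ and using $e_{1}+e_{2}+e_{3}=0$ to kill the constant term at $z=0$, one verifies the identity $\sum_{k=0}^{3}\wp(z+\tfrac{\omega_{k}}{2};\tau)=4\wp(2z;\tau)$; differentiating in $z$ yields $\sum_{k=0}^{3}\wp'(z+\tfrac{\omega_{k}}{2};\tau)=8\wp'(2z;\tau)$, so EPVI$_{\mathbf{0}}$ collapses to the single equation
\[
p''(\tau)=-\tfrac{1}{4\pi^{2}}\wp'(2p(\tau);\tau).
\]
Second, using the classical identity
\[
\zeta(u+v)-\zeta(u-v)-2\zeta(v)=-\frac{\wp'(v)}{\wp(u)-\wp(v)}
\]
with $u=p$ and $v=a$, together with $Z_{r,s}=\zeta(a)-r\eta_{1}-s\eta_{2}$, the definition \eqref{513-1} becomes the clean linear relation
\[
\zeta(p+a;\tau)-\zeta(p-a;\tau)=2r\eta_{1}(\tau)+2s\eta_{2}(\tau). \qquad(\star)
\]
Morally, $(\star)$ says that at $z=p-a$ the elliptic function $z\mapsto\zeta(z+2a)-\zeta(z)$ assumes exactly the quasi-period shift one would get if $2a$ lay in $\Lambda_{\tau}$.

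\emph{Step 2: differentiate $(\star)$ twice.} I differentiate $(\star)$ twice in $\tau$, treating $a'(\tau)=s$ and $p=p(\tau)$ implicitly. The essential analytic input is the heat-type identity
\[
\partial_{\tau}\zeta(z;\tau)=z\,\eta_{1}'(\tau)+\tfrac{1}{4\pi i}\Bigl[-\wp'(z;\tau)-2\bigl(\zeta(z;\tau)-\eta_{1}(\tau)z\bigr)\bigl(\wp(z;\tau)+\eta_{1}(\tau)\bigr)\Bigr],
\]
derived from $\zeta=(\log\theta_{1})'+\eta_{1}z$ and the heat equation $\partial_{\tau}\theta_{1}=\tfrac{1}{4\pi i}\partial_{z}^{2}\theta_{1}$, together with the Legendre-derived relation $\eta_{2}'=\eta_{1}+\tau\eta_{1}'$. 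After the first $\tau$-derivative of $(\star)$ the $\eta_{1}'$-contributions cancel: the $z\eta_{1}'$ pieces at $z=p\pm a$ combine to $2a\eta_{1}'$, which exactly matches the $\eta_{1}'$ part of $\partial_{\tau}(2r\eta_{1}+2s\eta_{2})$, leaving an explicit formula for $p'(\tau)$ in terms of $\wp,\wp',\zeta,\eta_{1}$ evaluated at $p\pm a$. A second differentiation, combined with \eqref{513-1} and repeated use of the addition formula to re-express $\wp(p\pm a),\wp'(p\pm a),\zeta(p\pm a)$ in terms of $p$ alone, should collapse to $p''=-\tfrac{1}{4\pi^{2}}\wp'(2p)$.

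\emph{Main obstacle.} The hard part is the algebraic bookkeeping in the second differentiation: many cross-terms of comparable shape must cancel pairwise, and one must deploy the addition formula carefully to reduce quantities at $p\pm a$ to functions of $p$. A conceptually cleaner alternative, pursued by Hitchin in \cite{Hit1}, is to realise $p_{r,s}^{\mathbf{0}}$ as the apparent singularity of a monodromy-preserving logarithmic rank-two connection on $E_{\tau}$ with the prescribed characteristic exponents at the half-periods, whereupon EPVI$_{\mathbf{0}}$ emerges as the Schlesinger flow; this bypasses the direct computation at the cost of deploying the full isomonodromy framework.
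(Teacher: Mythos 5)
Your Step~1 is correct and is genuinely useful preparation. The duplication identity $\sum_{k=0}^{3}\wp(z+\frac{\omega_k}{2};\tau)=4\wp(2z;\tau)$ holds (the difference is elliptic with respect to $\frac12\Lambda_\tau$, entire, and its constant term vanishes because $e_1+e_2+e_3=0$), so EPVI$_{\mathbf 0}$ does reduce to the single equation $p''(\tau)=-\frac{1}{4\pi^2}\wp'(2p(\tau);\tau)$. Your addition-formula rewriting of \eqref{513-1} as $\zeta(p+a;\tau)-\zeta(p-a;\tau)=2r\eta_1(\tau)+2s\eta_2(\tau)$ with $a=r+s\tau$ is also a correct reformulation of Hitchin's formula, your heat-type expression for $\partial_\tau\zeta$ checks out against $\zeta=(\log\theta_1)'+\eta_1 z$ and the theta heat equation, and the first $\tau$-differentiation does cancel the $\eta_1'$-contributions as you claim, leaving $2s\eta_1$ on the right-hand side and an equation for $p'(\tau)$ --- though note that solving for $p'$ requires $\wp(p+a;\tau)\neq\wp(p-a;\tau)$, a nondegeneracy you never address, and that $p(\tau)$ is only locally well-defined and holomorphic away from $p(\tau)\in E_\tau[2]$, where $\wp'(p)=0$ obstructs the implicit function theorem.

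However, the proposal stops exactly where the theorem begins. The entire content of the statement is that the second $\tau$-derivative of the implicitly defined $p(\tau)$ collapses to $-\frac{1}{4\pi^2}\wp'(2p)$, and your Step~2 merely asserts that the computation ``should collapse'' to this; the decisive cancellations are conjectured, not established. The second differentiation moreover requires $\partial_\tau\wp(z;\tau)$ and a formula for $\eta_1'(\tau)$, neither of which you set up (both are derivable from your heat identity, but they must appear in any completed computation). For the record, the paper itself offers no proof of Theorem~\ref{thm-Hitchin}: it imports the result from \cite{Hit1}, where Hitchin's argument is precisely the isomonodromy realization you relegate to your ``main obstacle'' paragraph. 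A completed direct verification in the spirit of your plan does exist --- essentially the route through the Hamiltonian system \eqref{142-0} of Theorem~\ref{thm-2-6} from \cite{Chen-Kuo-Lin0}, where one reads off $A(\tau)$ from your relation $(\star)$ via the first Hamilton equation and then verifies the second --- so the strategy is viable; but as written, your submission is a correct reduction plus correct lemmas followed by an unproved claim, i.e.\ a plan rather than a proof.
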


It is known (cf. \cite[Section 5]{Chen-Kuo-Lin}) that solutions of EPVI$_{\mathbf{n}}$ could be
obtained from solutions of EPVI$_{\mathbf{0}}$ via the well-known Okamoto transformations
(\cite{Okamoto1}).
\medskip

\noindent{\bf Notation:} \emph{Denote by $p_{r,s}^{\mathbf{n}}(\tau)$ to be the solution of EPVI$_{\mathbf{n}}$ obtained from $p_{r,s}^{\mathbf{0}}(\tau)$
in Theorem \ref{thm-Hitchin} via the Okamoto transformations}.
\medskip

Then by applying Hitchin's formula (\ref{513-1}) and the Okamoto transformation, we proved in \cite[Remark 5.2]{Chen-Kuo-Lin} that

\medskip

\noindent{\bf Lemma B.} \cite[Remark 5.2]{Chen-Kuo-Lin} {\it Given $\mathbf{n}$,
there is a rational function $\Xi_{\mathbf{n}}(\cdot,\cdot,\cdot,\cdot,\cdot,\cdot)$ of six independent variables with coefficients in $\mathbb{Q}$ such that for any $(r,s)\in\mathbb{C}^2\setminus\frac12\mathbb{Z}^2$, there holds
\begin{equation}\label{fxfx}
\wp(p_{r,s}^{\mathbf{n}}(\tau);\tau)= \Xi_{\mathbf{n}}(Z_{r,s}(\tau),\wp(r+s\tau;\tau),\wp'(r+s\tau;\tau),
e_1(\tau),e_2(\tau),e_3(\tau)).
\end{equation}

For example, by writing \[Z=Z_{r,s}(\tau), \quad\wp=\wp(r+s\tau;\tau), \quad\wp'=\wp'(r+s\tau;\tau)\] for convenience, we have
\begin{equation*}
\wp(p_{r,s}^{(1,0,0,0)}(\tau);\tau)=\wp+\frac{3\wp^{\prime}Z^{2}+\left(
12\wp^{2}-g_{2}\right)  Z+3\wp\wp^{\prime}}{2(Z^{3}-3\wp
Z-\wp^{\prime})}.
\end{equation*}}

\begin{remark}It is well-known (cf. \cite{GP}) that PVI \eqref{46} is closely related to the isomonodromy theory of a
second order Fuchsian ODE with five regular singularities $\{0,1,\infty,t,\lambda(t)\}$ on $\mathbb{CP}^{1}$. In \cite{Chen-Kuo-Lin0}, we proved that similarly to this well-known case on $\mathbb{CP}^{1}$,
EPVI (\ref{124}) also governs the isomonodromic deformation of
a generalized Lam\'{e} equation (GLE); see Section 2 for a brief overview, where we will see that the $(r,s)$ will appear as the monodromy data of the associated GLE.
\end{remark}

Define
\begin{equation}\label{fc-om}\Omega^{\mathbf n}_{\tau}:=\Big\{\pm p_{r,s}^{\mathbf n}(\tau)\in E_{\tau}\; :\; (r,s)\in \mathbb{R}^{2}\backslash \frac{1}{2}\mathbb{Z}^{2}\Big\}\neq \emptyset.\end{equation}
For example,
\[
\Omega _{\tau }^{\mathbf 0}=\left \{ p\in E_{\tau }\left \vert
\begin{array}{l}
\wp (p;\tau)=\wp (r+s\tau;\tau)+\frac{\wp ^{\prime }(r+s\tau;\tau )}{%
2Z_{r,s}(\tau)}\text{ } \\
\text{for some }(r,s)\in \mathbb{R}^{2}\backslash \frac{1}{2}\mathbb{Z}^{2}%
\end{array}%
\right. \right \}.
\]
Now we can state our main result of this paper.

\begin{theorem}
\label{theorem-1}Suppose $n_{k}\in\N$ for all $k$ and given
$\tau \in \mathbb{H}$, and $p\not\in E_{\tau}[2]$. 
Then
the curvature equation \eqref{mfe-p}
$$
\Delta u+e^{u}=8\pi \sum_{k=0}^{3}n_{k}\delta_{\frac{\omega_{k}}{2}}+4\pi \left(  \delta_{p}+\delta_{-p}\right)  \quad\text{ on }\;
E_{\tau}$$
has even solutions if and only if $p\in\Omega_{\tau}^{\mathbf n}\setminus E_{\tau}[2]$.

Furthermore, for the special case $\mathbf n=\mathbf 0$, the condition ``even'' can be deleted, namely
\eqref{mfe-p} with $\mathbf n=\mathbf 0$ has solutions if and only if $p\in\Omega_{\tau}^{\mathbf 0}\setminus E_{\tau}[2]$.
\end{theorem}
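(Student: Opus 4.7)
The plan is to translate \eqref{mfe-p} into an isomonodromy problem for a second order Fuchsian ODE on $E_\tau$ (a generalized Lam\'e equation, or GLE), and then combine the correspondence between GLEs and EPVI$_{\mathbf{n}}$ solutions $p_{r,s}^{\mathbf{n}}$ developed in \cite{Chen-Kuo-Lin0} with a reality criterion on the monodromy to pick out $\Omega_\tau^{\mathbf n}$.

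First, I would recall the developing-map construction. By the uniformization of constant $+1$ curvature metrics with conic singularities, an even solution $u$ of \eqref{mfe-p} produces a multivalued meromorphic developing map $f:E_\tau\setminus(E_\tau[2]\cup\{\pm p\})\to\mathbb{CP}^1$ satisfying $e^{u}=8|f'|^2/(1+|f|^2)^2$ with monodromy conjugate into $\mathrm{PSU}(2)$. Evenness of $u$ forces the Schwarzian $I(z):=\tfrac12\{f,z\}$ to be a single-valued \emph{even} elliptic function on $E_\tau$, whose pole orders, residues, and local exponents at $\omega_k/2$ and at $\pm p$ are dictated by the conic angles $2\pi(2n_k+1)$ and $4\pi$ respectively. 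Matching these local data leaves exactly one free complex parameter (an apparent singularity $q$), so $f=y_1/y_2$ where $y$ solves a GLE $y''=I_{\tau,\mathbf n,p,q}(z)\,y$ of precisely the form treated in \cite{Chen-Kuo-Lin0}. Thus the existence of an even solution of \eqref{mfe-p} is equivalent to the existence of such a GLE whose monodromy lies in $\mathrm{PSU}(2)$.

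Next I would invoke the isomonodromy theorem of \cite{Chen-Kuo-Lin0}: varying $\tau$ while preserving the GLE's monodromy forces the apparent singularity $q=q(\tau)$ to satisfy EPVI$_{\mathbf n}$, hence $q(\tau)=p_{r,s}^{\mathbf n}(\tau)$ for some $(r,s)\in\mathbb{C}^2\setminus\tfrac12\mathbb{Z}^2$ labelling the monodromy class. A direct computation of the monodromy generators via Hitchin's formula \eqref{513-1} and tracking through the Okamoto transformations then shows that these generators admit a simultaneous conjugation into $\mathrm{SU}(2)$ precisely when $(r,s)\in\mathbb{R}^2$; combined with $q(\tau)=p$ this yields $p\in\Omega_\tau^{\mathbf n}\setminus E_\tau[2]$. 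For the converse I would start with $p=p_{r,s}^{\mathbf n}(\tau)$, real $(r,s)$, form the associated GLE, which has unitary monodromy, and reconstruct a single-valued $e^u$ from a unitary-invariant ratio of quadratic expressions in the GLE solutions; evenness of $I(z)$ in $z$ lets one choose the basis so that $u$ is even and solves \eqref{mfe-p}.

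For the special case $\mathbf n=\mathbf 0$ the lattice points $\omega_k/2$ drop out of the singular set, so $I(z)$ has only the forced poles at $\pm p$ (with exponents $-\tfrac12,\tfrac32$) plus one apparent singularity. A residue count among admissible elliptic functions shows $I(z)$ is automatically even in $z$, so the Schwarzian of \emph{any} solution $u$ is even; a short argument using that $u$ and $u(-z)$ both develop via the same GLE (with the same Hermitian form reconstructed from its unitary monodromy) then gives $u(z)=u(-z)$, so the evenness hypothesis can be removed. I expect the main obstacle to be the reality criterion in the second step: the explicit monodromy formulas for the GLE underlying $p_{r,s}^{\mathbf n}$ get intricate after the chain of Okamoto transformations defining $p_{r,s}^{\mathbf n}$, and one must verify carefully that each Okamoto step preserves the equivalence ``monodromy in $\mathrm{SU}(2)\Leftrightarrow(r,s)\in\mathbb{R}^2$'', as well as the identification of the removed lattice $\tfrac12\mathbb Z^2$ with the locus where $p$ hits $E_\tau[2]$.
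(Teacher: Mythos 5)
Your architecture coincides with the paper's actual proof. The paper first proves (Theorem \ref{thm-3-1}) that an even solution of \eqref{mfe-p} yields, via the Liouville developing map, the identity $u_{zz}-\tfrac12 u_z^2=-2I_{\mathbf n}(z;p,A,\tau)$ for some $A\in\mathbb C$, hence a GLE with unitary monodromy, and conversely reconstructs an even solution from $f(z)=y_1(z)/y_1(-z)$ when the monodromy is diagonal as in \eqref{n1n2} with $(r,s)\in\mathbb R^2$; it then passes to $\Omega_\tau^{\mathbf n}$ by solving the Hamiltonian system \eqref{142-0} with initial data $(p,A)$ (Theorem \ref{thm-2-6}) and identifying the resulting isomonodromic family as $p_{r,s}^{\mathbf n}(\tau)$ (Theorem \ref{thm-2-7}). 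This is your plan. The obstacle you flag as the main difficulty --- checking through the Okamoto transformations that unitarity corresponds exactly to $(r,s)\in\mathbb R^2\setminus\tfrac12\mathbb Z^2$ --- is not recomputed in the paper: it is precisely the content of the quoted result \cite[Theorem 5.3]{Chen-Kuo-Lin} (stated here as Theorem \ref{thm-2-7}), which says the GLE attached to $p_{r,s}^{\mathbf n}(\tau)$ has monodromy \eqref{n1n2} with the \emph{same} $(r,s)$ for every $\mathbf n$, so unitarity reduces to case (a) with real $(r,s)$ (Remark \ref{rmk2-1}). One terminological slip: the single free complex parameter in the potential is the accessory parameter $A$ (with $B$ then pinned down by the apparentness condition \eqref{101}); the apparent singularity is $\pm p$ itself, and its $\tau$-motion under the isomonodromic flow is the EPVI solution, so your ``$q(\tau)=p$'' is the initial condition $p(\tau_0)=p$, not an extra matching condition --- there is no fifth movable singular point of the Schwarzian, since $u$ is smooth away from $E_\tau[2]\cup\{\pm p\}$.

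The genuine gap is your final step in the case $\mathbf n=\mathbf 0$: the claim that every solution satisfies $u(z)=u(-z)$ is false. By Corollary \ref{coro-3-3}, replacing $f$ by $\beta f$ produces a one-parameter family of solutions $u_\beta$ of \eqref{mfe-0}, and $u_\beta$ is even only for $\beta=1$. Your ``same Hermitian form'' argument breaks because the unitary monodromy group here is abelian and diagonal, so it preserves a two-parameter family of Hermitian forms $\operatorname{diag}(\lambda,\mu)$ rather than a unique one; $u$ and $u(-\cdot)$ may develop with respect to different invariant forms, which is exactly the source of the $u_\beta$ family. Fortunately the theorem does not require evenness of all solutions --- only that an \emph{arbitrary} solution still forces a GLE with unitary monodromy, and your first two steps already give this once corrected: unitarity uses only the single-valuedness of $e^{-u/2}$ (never evenness), while evenness of $u$ enters solely to kill the residues of $u_{zz}-\tfrac12 u_z^2$ at the half-periods $\omega_k/2$. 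When $\mathbf n=\mathbf 0$ the solution is smooth there, so those poles are absent, and the residue count at $\pm p$ alone forces the (automatically even) potential $I_{\mathbf 0}(z;p,A,\tau)$ --- this is how the paper deletes the hypothesis. So drop the inference ``hence $u(z)=u(-z)$'' and run the necessity argument for arbitrary solutions directly; the sufficiency direction already produces an even, in particular some, solution.
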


Theorem \ref{theorem-1} shows that the existence of even solutions of \eqref{mfe-p} essentially depends on the location of the singularity $p$, since our next result indicates that we can not expect $\Omega_{\tau}^{\mathbf n}=E_{\tau}$ in general. Denote $B(a, \varepsilon):=\{z\in E_{\tau}: |z-a|<\varepsilon\}$.

\begin{theorem}\label{main-thm2}
Let $\mathbf n=\mathbf 0$ and $\tau\in i\mathbb{R}_{>0}$, i.e. $E_{\tau}$ is a rectangular torus. Then there is $\varepsilon>0$ such that $\Omega_{\tau}^{\mathbf 0}\cap \cup_{k=0}^3B(\frac{\omega_k}{2}, \varepsilon)=\emptyset$. In other words, if $0<|p-\frac{\omega_k}{2}|<\varepsilon$ for some $k$, then the curvature equation \eqref{mfe-p} with $\mathbf n=\mathbf 0$
\begin{equation}\label{mfe-0}\Delta u+e^u=4\pi (\delta_p+\delta_{-p}) \quad\text{on }E_{\tau}\end{equation}
 has no solutions.
\end{theorem}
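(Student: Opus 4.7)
The plan is to argue by contradiction: suppose there is a sequence $\{p_n\}\subset\Omega_\tau^{\mathbf 0}\setminus E_\tau[2]$ with $p_n\to\omega_k/2$ for some $k\in\{0,1,2,3\}$. By the definition of $\Omega_\tau^{\mathbf 0}$, each $p_n=\pm p_{r_n,s_n}^{\mathbf 0}(\tau)$ with $(r_n,s_n)\in\mathbb R^2\setminus\tfrac12\mathbb Z^2$; applying $\wp$ and using Hitchin's formula \eqref{513-1}, the convergence $p_n\to\omega_k/2$ becomes $F(r_n,s_n,\tau)\to e_k$ in $\mathbb{CP}^1$ (with $e_0:=\infty$), where
\[F(r,s,\tau):=\wp(r+s\tau;\tau)+\frac{\wp'(r+s\tau;\tau)}{2Z_{r,s}(\tau)}.\]
Since $F$ is $\mathbb Z^2$-periodic in $(r,s)$, I pass to a subsequence with $(r_n,s_n)\to(r_0,s_0)$ in the compact torus $\mathbb R^2/\mathbb Z^2$, and split into two cases.

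\emph{Boundary case $(r_0,s_0)\in\tfrac12\mathbb Z^2$.} By symmetry I treat $(r_0,s_0)=(1/2,0)$; the other three punctures are analogous. Writing $(r,s)=(1/2+\delta r,\delta s)$ and $h=\delta r+\delta s\tau$, the expansions $\wp(1/2+h)=e_1+\tfrac{A}{2}h^2+O(h^4)$, $\wp'(1/2+h)=Ah+O(h^3)$, and $\zeta(1/2+h)=\eta_1/2-e_1 h+O(h^3)$ (using $\wp'(1/2)=0$, $A=\wp''(1/2)=2(e_1-e_2)(e_1-e_3)$) yield
\[F(r,s,\tau)-e_1=-\frac{A(\delta r+\delta s\tau)}{2[(e_1+\eta_1)\delta r+(e_1\tau+\eta_2)\delta s]}+O(|(\delta r,\delta s)|).\]
Along a real ray $(\delta r,\delta s)=t(a,b)$, the leading term is the direction-dependent constant $L(a,b)$. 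With $\tau=i\beta$ and $\eta_2=i\gamma$, $\gamma=\beta\eta_1-2\pi$ (Legendre), a direct computation gives
\[\operatorname{Im}L(a,b)=\frac{-\pi A\, ab}{(e_1+\eta_1)^2 a^2+(e_1\beta+\gamma)^2 b^2},\]
so $L(a,b)\in\mathbb R$ only in the degenerate directions $ab=0$, producing the two real values $L(1,0)=-A/[2(e_1+\eta_1)]$ and $L(0,1)=-A\beta/[2(e_1\beta+\gamma)]$. The requirement $F\to e_k$ forces $L=0$ (impossible, as $a+b\tau\ne 0$ for real $(a,b)\ne 0$) when $k=1$, $L=\infty$ (ruled out since the denominator does not vanish in a real direction) when $k=0$, and $e_k-e_1\in\{L(1,0),L(0,1)\}$ when $k\in\{2,3\}$. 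Substituting $A=2(e_1-e_2)(e_1-e_3)$, the latter reduces to the algebraic conditions $\eta_1+e_j=0$ or $\beta(\eta_1+e_j)=2\pi$ for $j\in\{2,3\}$, which I will rule out for every $\tau\in i\mathbb R_{>0}$ via an elementary positivity argument for rectangular elliptic invariants (relying on integral representations and the $\wp\ge e_1$ estimate on the real axis).

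\emph{Interior case $(r_0,s_0)\notin\tfrac12\mathbb Z^2$.} Here $F$ is continuous at $(r_0,s_0)$ and $F(r_0,s_0,\tau)=e_k$, i.e.\ $p_{r_0,s_0}^{\mathbf 0}(\tau)=\omega_k/2\in E_\tau[2]$. Via Hitchin's isomonodromy picture and the reality of $(r_0,s_0)\in\mathbb R^2$, this produces an even solution of the limit equation $\Delta u+e^u=8\pi\delta_{\omega_k/2}$ on $E_\tau$ (i.e.\ \eqref{mfe-0} with $\mathbf n$ the indicator of $k$). The non-existence of such an even solution on a rectangular torus follows from the pre-modular form characterization in \cite{LW2,CKL2}: the associated pre-modular form has no zeros in $i\mathbb R_{>0}$. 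This gives the desired contradiction. Combining the two cases, $\omega_k/2\notin\overline{\Omega_\tau^{\mathbf 0}}$, so by compactness there is $\varepsilon_k>0$ with $B(\omega_k/2,\varepsilon_k)\cap\Omega_\tau^{\mathbf 0}=\emptyset$; take $\varepsilon=\min_k\varepsilon_k$.

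The main obstacle is the interior case: translating $F(r_0,s_0,\tau)=e_k$ into a genuine non-existence statement for the limit equation relies crucially on the pre-modular form theory for rectangular tori, which is non-trivial. If the cited non-existence is unavailable in the form needed, I would argue it directly by exploiting the additional $z\mapsto\bar z$ symmetry on rectangular $E_\tau$: a putative real-monodromy even solution of the limit equation would be invariant under the Klein four-group generated by $z\mapsto -z$ and $z\mapsto\bar z$, and I would obtain an obstruction via a monodromy/Pohozaev analysis of the associated generalized Lam\'e equation. The boundary case, although computational, also requires a few lemmas ensuring that the explicit scalars $\eta_1+e_j$ and $\beta(\eta_1+e_j)-2\pi$ do not vanish throughout $\tau\in i\mathbb R_{>0}$; these should follow from sign tracking under the $\beta\to 0$ and $\beta\to\infty$ asymptotics together with a monotonicity argument.
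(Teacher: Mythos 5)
Your overall strategy (contradiction plus compactness on the $(r,s)$-torus, separate analysis at the punctures of Hitchin's formula, with Lin--Wang's rectangular-torus results as the ultimate input) runs parallel to the paper's proof, but in a heavier coordinate form and with three genuine gaps. The paper first uses the translation symmetry $u(z)\mapsto u(z+\tfrac{\omega_k}{2})$ to reduce everything to $p_n\to 0$, then converts $p_n\in\Omega^{\mathbf 0}_\tau$ into a \emph{nontrivial critical point} $a_n=r_n+s_n\tau$ of $G_{p_n}$ via Theorem \ref{thm=4-3}; the case $a_n\to 0$ is excluded by the Laurent expansion (boundedness of $\tfrac{r_n\eta_1+s_n\eta_2}{r_n+s_n\tau}$ for real $(r_n,s_n)$ versus $\wp(p_n)\to\infty$), and the case $a_n\to\tfrac{\omega_k}{2}$ is excluded by Theorem \ref{thm-LW} (exactly three critical points, all non-degenerate) together with the implicit function theorem. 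Your local expansion at $(\tfrac12,0)$ and the $\operatorname{Im}L$ computation are correct, but: (i) the scalar conditions you need, $\eta_1+e_j\neq 0$ and $\beta(\eta_1+e_j)\neq 2\pi$ for $j=1,2,3$, are left entirely unproven (``I will rule out \dots via an elementary positivity argument'', ``should follow from sign tracking''), and these are not routine; in fact, by \eqref{G_z} the differential of $\nabla G$ at $\tfrac{\omega_j}{2}$ in $(r,s)$-coordinates is exactly the real linear form $(\delta r,\delta s)\mapsto(e_j+\eta_1)\delta r+(e_j\tau+\eta_2)\delta s$, so your conditions are precisely the non-degeneracy assertion of Theorem \ref{thm-LW}. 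Citing that theorem would close this gap at once; as written, the key lemmas of your boundary case are missing. (ii) Your claim that the other three punctures are ``analogous'' to $(\tfrac12,0)$ is false for $(0,0)$: there $z=r+s\tau\to 0$ is a pole of $\wp,\wp',\zeta$ rather than a zero of $\wp'$, and the correct expansion is the Laurent one, $F(r,s,\tau)=-\tfrac{r\eta_1+s\eta_2}{r+s\tau}+O(|r+s\tau|^2)$, which shows $F$ stays \emph{bounded} (this is the paper's own computation); the conclusion is recoverable and reduces to the same scalar conditions, but the argument as stated does not cover this puncture.

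(iii) The interior case is the most serious gap as written. From $F(r_0,s_0,\tau)=e_k$ you assert that ``Hitchin's isomonodromy picture'' produces an even solution of $\Delta u+e^u=8\pi\delta_{\omega_k/2}$, but the correspondence between Hitchin's solutions and curvature equations (Theorem \ref{theorem-1}, via Theorem \ref{thm-2-7}) is only available for $p\notin E_\tau[2]$, which is exactly violated when $p^{\mathbf 0}_{r_0,s_0}(\tau)=\tfrac{\omega_k}{2}$; some degeneration argument is required and none is given, and one cannot simply pass solutions of \eqref{mfe-0} to the limit $p\to\tfrac{\omega_k}{2}$ because of possible blow-up (the paper's remark after Theorem \ref{main-thm2} makes precisely this point). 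There is a much shorter correct route: the addition-formula computation in the proof of Theorem \ref{thm=4-3} remains valid at $p=\tfrac{\omega_k}{2}$, and $F(r_0,s_0,\tau)=e_k$ says exactly that $a_0=r_0+s_0\tau$ is a critical point of $G_{\omega_k/2}(z)=G(z-\tfrac{\omega_k}{2})$; hence $a_0-\tfrac{\omega_k}{2}\notin E_\tau[2]$ is a \emph{nontrivial} critical point of $G$, contradicting Theorem \ref{thm-LW} directly, with no pre-modular forms or limit PDE needed. In summary: your scheme can be completed, but only after (a) replacing the promised positivity lemmas by the non-degeneracy part of Theorem \ref{thm-LW} (or proving them, which amounts to reproving part of \cite{LW}), (b) treating the puncture $(0,0)$ by the Laurent expansion rather than the half-period expansion, and (c) replacing the isomonodromy bridge in the interior case by the Green-function identity; with those repairs your proof becomes essentially the paper's proof written in $(r,s)$-coordinates, minus the paper's simplifying translation trick that reduces all four balls to the single case $p_n\to 0$.
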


\begin{remark}
Lin and Wang \cite{LW} proved that
\begin{equation}\label{mfe-8pi}\Delta u+e^u=8\pi \delta_0 \quad\text{on }E_{\tau}\end{equation}
has no solutions for $\tau\in i\mathbb{R}_{>0}$. We want to emphasize that we can not conclude from this fact that \eqref{mfe-0} 
has no solutions for $|p|>0$ small, because solutions of \eqref{mfe-0} might blow up as $p\to 0$. Thus, Theorem \ref{main-thm2} is not a simple corollary of the non-existence result from \cite{LW}. 
\end{remark}

We will prove Theorem \ref{main-thm2} by using the Green function $G(z)=G(z;\tau)$ on the torus $E_{\tau}$, which is defined by
\[
-\Delta G(z;\tau)=\delta_{0}-\frac{1}{\left \vert E_{\tau}\right \vert }\text{
\ on }E_{\tau},\quad
\int_{E_{\tau}}G(z;\tau)=0,
\]
where $\vert E_{\tau}\vert$ is
the area of the torus $E_{\tau}$. It is an even function with the only
singularity at $0$. For $k\in \{1,2,3\}$, since
\begin{align}\label{gde}\nabla G(\frac{\omega_k}{2})=\nabla G(\frac{\omega_k}{2}-\omega_k)=\nabla G(-\frac{\omega_k}{2})=-\nabla G(\frac{\omega_k}{2}),\end{align}
we see that $\frac{\omega_k}{2}$ are always critical points of $G(z)$. Now for any $p\in E_{\tau}\setminus E_{\tau}[2]$, we define
$$G_p(z):=\frac12(G(z-p)+G(z+p)).$$
Then $G_p(z)$ is also even, so the same argument as \eqref{gde} implies that $\frac{\omega_k}{2}$, $k=0,1,2,3$, are all critical points of $G_p(z)$. 

\begin{definition}
A critical point $a\in E_{\tau}$ of $G$ (resp. $G_p$) is called trivial if $a=-a$ in $E_{\tau}$, i.e. $a\in E_{\tau}[2]$.  A critical point $a\in E_{\tau}$ is called nontrivial if $a\neq-a$ in $E_{\tau}$, i.e. $a\notin E_{\tau}[2]$.
\end{definition}

Therefore, nontrivial critical points, if exist, must appear in pairs.
 Lin and Wang \cite{LW} studied the number of critical points of $G(z;\tau)$ for any $\tau$, and proved that
 
 \medskip
\noindent{\bf Theorem C.} \cite{LW}  {\it $G(z;\tau)$ has almost one pair of nontrivial critical points, or equivalently, $G(z;\tau)$ has either $3$ or $5$ critical points (depends on the choice of $\tau$).}
 \medskip
 
 They proved this remarkable result by showing that there is a one-to-one correspondence between pairs of nontrivial critical points of $G(z;\tau)$ and even solutions of the curvature equation \eqref{mfe-8pi}.
Then by proving that \eqref{mfe-8pi} has at most one even solution by PDE methods, they finally obtained that $G(z;\tau)$ has almost one pair of nontrivial critical points. See \cite{LW} for details.

Here we generalize a part of the above result to the following.

\begin{theorem}\label{thm=4-3} For $p\in E_{\tau}\setminus E_{\tau}[2]$, $G_p(z)$ has nontrivial critical points if and only if $p\in \Omega_{\tau}^{\mathbf 0}$, if and only if the curvature equation
\eqref{mfe-0}
has solutions.
\end{theorem}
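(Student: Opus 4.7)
The plan is to split the claimed triple equivalence into two parts. The equivalence
\begin{equation*}p\in\Omega_\tau^{\mathbf 0}\setminus E_\tau[2]\iff \text{\eqref{mfe-0} admits a solution}\end{equation*}
is exactly the ``furthermore'' assertion of Theorem \ref{theorem-1} in the case $\mathbf n=\mathbf 0$, so no additional work is needed there. Hence the only new task is to establish
\begin{equation*}G_p\ \text{has a nontrivial critical point}\iff p\in\Omega_\tau^{\mathbf 0},\end{equation*}
which I would carry out via a direct computation of $\nabla G_p$ in terms of Weierstrass data, matching the condition with Hitchin's formula \eqref{513-1}.

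Starting from the standard representation $G(z;\tau)=-\frac{1}{2\pi}\log|\theta_1(z;\tau)|+\frac{(\operatorname{Im}z)^2}{2\operatorname{Im}\tau}+C$ together with $\theta_1'(z)/\theta_1(z)=\zeta(z;\tau)-\eta_1 z$, a short calculation yields
\begin{equation*}-4\pi\,\partial_z G(z;\tau)=\zeta(z;\tau)-\eta_1 z+\frac{\pi(z-\bar z)}{\operatorname{Im}\tau}.\end{equation*}
Summing the analogous identities for $G(z-p)$ and $G(z+p)$, the $p$-dependent pieces in the non-holomorphic tail cancel, producing
\begin{equation*}-8\pi\,\partial_z G_p(z)=\zeta(z-p;\tau)+\zeta(z+p;\tau)-2\eta_1 z+\frac{2\pi(z-\bar z)}{\operatorname{Im}\tau}.\end{equation*}
Applying the classical addition formula $\zeta(z-p)+\zeta(z+p)=2\zeta(z)+\wp'(z)/(\wp(z)-\wp(p))$, the critical point equation $\partial_z G_p(z)=0$ becomes
\begin{equation*}2\zeta(z;\tau)+\frac{\wp'(z;\tau)}{\wp(z;\tau)-\wp(p;\tau)}=2\eta_1 z-\frac{2\pi(z-\bar z)}{\operatorname{Im}\tau}.\end{equation*}

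Next, writing an arbitrary point of $E_\tau$ as $z_0=r+s\tau$ with $r,s\in\mathbb{R}$, one has $z_0-\bar z_0=2is\operatorname{Im}\tau$, and the Legendre relation $\tau\eta_1-\eta_2=2\pi i$ gives $2\eta_1 z_0-2\pi(z_0-\bar z_0)/\operatorname{Im}\tau=2r\eta_1+2s\eta_2$. Invoking the definition \eqref{z-rs} of $Z_{r,s}(\tau)$, the critical point equation rearranges to exactly Hitchin's formula \eqref{513-1}:
\begin{equation*}\wp(p;\tau)=\wp(r+s\tau;\tau)+\frac{\wp'(r+s\tau;\tau)}{2Z_{r,s}(\tau)}.\end{equation*}
Since $z_0\in E_\tau[2]$ is equivalent to $(r,s)\in\frac12\mathbb{Z}^2$, a \emph{nontrivial} critical point $z_0=r+s\tau$ of $G_p$ corresponds to a pair $(r,s)\in\mathbb{R}^2\setminus\frac12\mathbb{Z}^2$, and by the definition \eqref{fc-om} this is precisely saying $p=\pm p_{r,s}^{\mathbf 0}(\tau)\in\Omega_\tau^{\mathbf 0}$. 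Combining this with Theorem \ref{theorem-1} completes the triple equivalence.

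The main delicate point, and where I would be most careful, is the algebraic bookkeeping: one must verify that the $\bar z$-dependent piece cancels \emph{precisely} upon restricting to real coordinates $(r,s)\in\mathbb{R}^2$ (which is exactly where the Legendre relation enters), and that the sign ambiguity in $\pm p_{r,s}^{\mathbf 0}$ causes no issue because $G_p=G_{-p}$ and the Hitchin condition \eqref{513-1} determines $p$ only up to sign in $E_\tau$. This is not a genuine obstacle, but is the step most prone to slips.
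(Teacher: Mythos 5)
Your proposal is correct and follows essentially the same route as the paper: express $\partial_z G_p$ through the Weierstrass zeta function, apply the addition formula $\zeta(a+p)+\zeta(a-p)-2\zeta(a)=\wp'(a)/(\wp(a)-\wp(p))$ to identify the critical point equation with Hitchin's formula \eqref{513-1}, and then invoke Theorem \ref{theorem-1} for the PDE equivalence. The only difference is presentational: the paper quotes the gradient formula \eqref{G_z} directly from Lin--Wang, whereas you re-derive it from the theta-function representation of $G$ via the Legendre relation, which is a correct equivalent computation.
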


Motivated by Theorem C, a natural problem is how many pairs of nontrivial critical points $G_p(z)$ might have for $p\in \Omega_{\tau}^{\mathbf 0}$. By following the approach in \cite{LW}, it is not difficult to prove that there is also a one-to-one correspondence between pairs of nontrivial critical points of $G_p(z)$ and even solutions of the curvature equation \eqref{mfe-0}. However, the problem that how many even solutions \eqref{mfe-0} might have 
seems very difficult and we should study it in future.

The rest of this paper is organized as follows. In Section 2, we introduce a generalized Lam\'{e} equation, briefly review its monodromy theory and its deep connection with the Painlev\'{e} VI equation. In Section 3, by exploring the deep connection between the curvature equation and the generalized Lam\'{e} equation, we prove Theorem \ref{theorem-1}. Finally, Theorems \ref{main-thm2} and \ref{thm=4-3} will be proved in Section 4.

\section{Generalized Lam\'{e} equation}

In this section, we briefly review the monodromy theory of 
the generalized Lam\'{e} equation
(denoted by GLE$(\mathbf{n},p,A,\tau)$):
\begin{align} \label{89-1}
y''=\Big[
&\sum_{k=0}^{3}n_{k}(n_{k}+1)\wp(z-\tfrac{\omega_{k}}{2};\tau)+\frac{3}{4}
(\wp(z+p;\tau)+\wp(z-p;\tau))\\
&+A(\zeta(z+p;\tau)-\zeta(z-p;\tau))+B
\Big]  y=:I_{\mathbf{n}}(z;p,A,\tau)y,\nonumber
\end{align}
with $n_{k}\in \mathbb{N}$ for all $k$, $p \not \in E_{\tau}[2]$, $A\in\mathbb{C}$ and%
\begin{equation}
B=A^{2}-\zeta(2p;\tau)A-\frac{3}{4}\wp(2p;\tau)-\sum_{k=0}^{3}n_{k}(n_{k}+1)\wp(
p-\tfrac{\omega_{k}}{2};\tau)  . \label{101}%
\end{equation}
Note that \eqref{101} is equivalent to saying that $\pm p\not \in E_{\tau}[2]$ are always
\emph{apparent singularities} (i.e. any solution of \eqref{89-1} has no logarithmic singularities at $\pm p$; see Remark \ref{rmk2-0} below). See \cite{Chen-Kuo-Lin0} for a proof and also \cite{CKL1,Chen-Kuo-Lin,Takemura} for recent studies on (\ref{89-1}).
Clearly
\begin{align*}
&  \text{GLE}(\mathbf{n},p,A,\tau)\; \text{\textit{is independent of any
representative }}\tilde{p}\in p+\Lambda_{\tau}\\
&  \text{\textit{and} GLE}(\mathbf{n},p,A,\tau)=\text{GLE}(\mathbf{n}%
,-p,-A,\tau).
\end{align*}
The precise connection between GLE$(\mathbf{n},p,A,\tau)$ and the curvature equation \eqref{mfe-p} will be studied in Section 3.

\begin{remark}\label{rmk2-0} Fix any $a\in E_{\tau}$. Suppose GLE$(\mathbf{n},p,A,\tau)$ has a local solution of the following form near $z=a$
\begin{equation}\label{ya}y_{\kappa}(z)=(z-a)^{\kappa}\sum_{j=0}^\infty c_j (z-a)^j=c_0 (z-a)^{\kappa}(1+O(z-a)),\;\,\text{with }c_0\neq 0.\end{equation}
then by inserting this formula into \eqref{89-1}, a direct computation shows that $\kappa$ satisfies $\kappa^2-\kappa+c_{a}=0$, where 
$$c_a=\begin{cases}0 &\text{if }a\notin E_{\tau}[2]\cup\{\pm [p]\},\\
-\frac{3}{4}&\text{if }a=\pm p,\\
-n_k(n_k+1)&\text{if }a=\frac{\omega_k}{2}.\end{cases}$$
The zeros of $\kappa^2-\kappa+c_{a}=0$ are called the \emph{local exponents} of GLE$(\mathbf{n},p,A,\tau)$ at $a$. Therefore, the local exponents of GLE$(\mathbf{n},p,A,\tau)$ at $\frac{\omega_k}{2}$ are $-n_k, n_k+1$, the local exponents of GLE$(\mathbf{n},p,A,\tau)$ at $\pm p$ are $-\frac12, \frac32$, and the local exponents of GLE$(\mathbf{n},p,A,\tau)$ at any $a\notin E_{\tau}[2]\cup\{\pm [p]\}$ are always $0,1$.

From the theory of linear ODEs with complex variable (see e.g. \cite[Chapter 1]{GP}), we collect some basic facts.
\begin{itemize}
\item For $a=\pm p$, GLE$(\mathbf{n},p,A,\tau)$ always has a local solution $y_{\kappa}(z)$ with $\kappa=\frac32$, but GLE$(\mathbf{n},p,A,\tau)$ may not have local solutions $y_{\kappa}(z)$ of the form \eqref{ya} with $\kappa=-\frac12$ in general. If this happens, then GLE$(\mathbf{n},p,A,\tau)$ must have solutions with logarithmic singularity at $\pm p$, i.e. the local expansion of such solutions at $\pm p$ is not of the form \eqref{ya} but contains $\ln (z\mp p)$ terms. The singularity $\pm p$ is called \emph{apparent} if any solution of GLE$(\mathbf{n},p,A,\tau)$ has no logarithmic singularity at $\pm p$, i.e. GLE$(\mathbf{n},p,A,\tau)$ also has a local solution $y_{\kappa}(z)$ with $\kappa=-\frac12$. It was proved in \cite{Chen-Kuo-Lin0} that $\pm p$ are both apparent singularities if and only if \eqref{101} holds.
\item For $a=\frac{\omega_k}{2}$,  since the potential $I_{\mathbf{n}}(z;p,A,\tau)$ is even elliptic as a function of $z$, it is standard to see (cf. \cite{Chen-Kuo-Lin0,CKL1,Takemura}) that  GLE$(\mathbf{n},p,A,\tau)$ always has a local solution $y_{\kappa}(z)$ with each $\kappa\in \{-n_k, n_k+1\}$, i.e. $\frac{\omega_k}{2}$ is always apparent. Since $n_k\in \mathbb{N}$, it follows that all solutions of GLE$(\mathbf{n},p,A,\tau)$ are single-valued and meromorphic near the singularity $\frac{\omega_k}{2}$.
\end{itemize}
\end{remark}

Fix any base point $q_{0}\in E_{\tau}$ that is not a singularity of \eqref{89-1}. The monodromy representation of GLE$(\mathbf{n},p,A,\tau)$ is a group homomorphism $\rho:\pi_{1}(  E_{\tau}\backslash
(\{  \pm [p]\}  \cup E_{\tau}[2]),q_{0})  \rightarrow
SL(2,\mathbb{C})$ defined as follows. Take any basis of solutions $(y_1(z), y_2(z))$ of GLE$(\mathbf{n},p,A,\tau)$. For any loop $\gamma\in \pi_{1}(  E_{\tau}\backslash
(\{  \pm [p]\}  \cup E_{\tau}[2]),q_{0})$, let $\gamma^*y(z)$ denote the analytic continuation of $y(z)$ along $\gamma$. Then $\gamma^*(y_1(z), y_2(z))$ is also a basis of solutions, so there is a matrix $\rho(\gamma)\in SL(2,\mathbb{C})$ such that
$$\gamma^*\begin{pmatrix}
y_{1}(z)\\
y_{2}(z)
\end{pmatrix}
=\rho(\gamma)
\begin{pmatrix}
y_{1}(z)\\
y_{2}(z)
\end{pmatrix}.$$
Here $\rho(\gamma)\in SL(2,\mathbb{C})$ (i.e. $\det \rho(\gamma)=1$) follows from the fact that the Wronskian $y_{1}'(z)y_2(z)-y_1(z)y_2'(z)$ is a nonzero constant.

Since Remark \ref{rmk2-0} says that all solutions of GLE$(\mathbf{n},p,A,\tau)$ are single-valued and meromorphic near the singularity $\frac{\omega_k}{2}$, we know that  the
local monodromy matrix at $\frac{\omega_k}{2}$ is always the identity matrix $I_{2}$. Thus the
monodromy representation can be reduced to $\rho:\pi
_{1}(  E_{\tau}\backslash \{  \pm [p]\}  ,q_{0})
\rightarrow SL(2,\mathbb{C})$. Let $\gamma_{\pm}\in \pi_{1}(  E_{\tau
}\backslash(\{ \pm [p]\} \cup E_{\tau}[2]),q_{0})  $ be a simple
loop encircling $\pm p$ counterclockwise respectively, and $\ell_{j}\in \pi
_{1}(  E_{\tau}\backslash(\{ \pm p\} \cup E_{\tau}[2]),q_{0})$, $j=1,2$, be two fundamental cycles of $E_{\tau}$ connecting
$q_{0}$ with $q_{0}+\omega_{j}$ such that $\ell_{j}$ does not intersect with
$L+\Lambda_{\tau}$ (here $L$ is the straight segment connecting $\pm p$) and
satisfies%
\[
\gamma_{+}\gamma_{-}=\ell_{1}\ell_{2}\ell_{1}^{-1}\ell_{2}^{-1}\text{ in }%
\pi_{1}(  E_{\tau}\backslash  \{  \pm [p] \}
,q_{0})  .
\]
Since the local exponents of (\ref{89-1}) at $\pm p$ are $
\{-\frac{1}{2}, \frac{3}{2}\}$ and $\pm p\not \in E_{\tau}[2]$ are apparent singularities, we
always have
$\rho(\gamma_{\pm})=-I_{2}$.
Denote by $N_{j}=\rho(\ell_j)$ the monodromy matrix along the loop $\ell_{j}$ of
(\ref{89-1}) with respect to any basis of solutions. Then $N_1N_2=N_2N_1$ and the monodromy group of (\ref{89-1}) is
generated by $\{-I_{2},N_{1},N_{2}\}$, i.e. is always \emph{abelian and so reducible}.
Then there are two cases (see \cite{CKL1,Chen-Kuo-Lin}):

\begin{itemize}
\item[(a)] Completely reducible (i.e. all the monodromy matrices have two
linearly independent common eigenfunctions, or equivalently, $N_1$ and $N_2$ can be diagonalized simultaneously). Up to a common conjugation,
$N_1$ and $N_2$ can be expressed as%
\begin{equation}\label{n1n2}N_{1}=%
\begin{pmatrix}
e^{-2\pi is} & 0\\
0 & e^{2\pi is}%
\end{pmatrix},\quad N_{2}=%
\begin{pmatrix}
e^{2\pi ir} & 0\\
0 & e^{-2\pi ir}%
\end{pmatrix}\end{equation}
for some $(r,s)\in \mathbb{C}^{2}\backslash \frac{1}{2}\mathbb{Z}^{2}$. 

\item[(b)] Not completely reducible (i.e. the space of common eigenfunctions
is of dimension $1$, or equivalently, $N_1$ and $N_2$ can not be diagonalized simultaneously). Up to a common conjugation, $N_1$ and
$N_2$ can be expressed as%
\begin{equation}
N_1=\varepsilon_{1}%
\begin{pmatrix}
1 & 0\\
1 & 1
\end{pmatrix}
,\text{ \  \  \ }N_2=\varepsilon_{2}%
\begin{pmatrix}
1 & 0\\
\mathcal{C} & 1
\end{pmatrix}
, \label{Mono-21}%
\end{equation}
where $\varepsilon_{1},\varepsilon_{2}\in \{ \pm1\}$ and $\mathcal{C}%
\in \mathbb{C}\cup \{ \infty \}$.
Here if $\mathcal{C}=\infty$, then (\ref{Mono-21}) should be understood
as%
\[
N_1=\varepsilon_{1}%
\begin{pmatrix}
1 & 0\\
0 & 1
\end{pmatrix}
,\text{ \  \  \ }N_2=\varepsilon_{2}%
\begin{pmatrix}
1 & 0\\
1 & 1
\end{pmatrix}
. \]

\end{itemize}

\begin{remark}\label{rmk2-1}
We say the monodromy of GLE$(\mathbf{n},p,A,\tau)$ is unitary if up to a common conjugation, the monodromy group of GLE$(\mathbf{n},p,A,\tau)$ is a subgroup of the unitary group $SU(2)$, which is equivalent to $N_1, N_2\in SU(2)$ up to a common conjugation. Clearly the monodromy of GLE$(\mathbf{n},p,A,\tau)$ is unitary if and only if Case (a) happens with $(r,s)\in\mathbb{R}^2\setminus\frac12\mathbb{Z}^2$.
\end{remark}

The deep connection between GLE$(\mathbf{n},p,A,\tau)$ and the elliptic form of Painlev\'{e} VI equation EPVI$_{\mathbf{n}}$ was studied in \cite{Chen-Kuo-Lin0}.

\begin{theorem}\cite[Theorems 1.1-1.3]{Chen-Kuo-Lin0}\label{thm-2-6}
Let
$U$ be an open subset of $\mathbb{H}$ such that $p(\tau)\not \in E_{\tau}[2]$
for any $\tau \in U$. Then
$p(\tau)$ {is a solution of EPVI$_{\mathbf{n}}$ if and only
if there exist $A(\tau)$ (and the corresponding $B(\tau)$ via \eqref{101})
 such that GLE$(\mathbf{n},p(\tau),A(\tau),\tau)$ is monodromy preserving
as $\tau \in U$ deforms}. Furthermore, $(p(\tau),A(\tau))$ satisfies the following Hamiltonian system%
\begin{equation}
\left \{
\begin{array}
[c]{l}%
\frac{dp(\tau)}{d\tau}=\frac{\partial \mathcal{H}}{\partial A}=\frac{-i}{4\pi
}(2A-\zeta(2p;\tau)+2p\eta_{1}(\tau))\\
\frac{dA(\tau)}{d\tau}=-\frac{\partial \mathcal{H}}{\partial p}=\frac{i}{4\pi
}\left(
\begin{array}
[c]{l}%
(2\wp(2p;\tau)+2\eta_{1}(\tau))A-\frac{3}{2}\wp^{\prime}(2p;\tau)\\
-\sum_{k=0}^{3}n_{k}(n_{k}+1)\wp^{\prime}(p-\frac{\omega_{k}}{2};\tau)
\end{array}
\right)
\end{array}
\right.  . \label{142-0}%
\end{equation}
where
$
\mathcal{H}  =\frac{-i}{4\pi}(B+2p\eta_{1}(\tau)A)$. In other words, this Hamiltonian system \eqref{142-0} is equivalent to EPVI$_{\mathbf{n}}$.
\end{theorem}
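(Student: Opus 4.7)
The plan is to follow a Jimbo--Miwa style isomonodromy framework, transferred from the Fuchsian setting on $\mathbb{CP}^{1}$ to the elliptic curve $E_{\tau}$. Three steps: (i) characterize isomonodromic deformations of GLE$(\mathbf{n},p,A,\tau)$ by an auxiliary linear system in $\tau$; (ii) read off the Hamiltonian form \eqref{142-0} from a zero-curvature compatibility; (iii) eliminate $A$ to recover EPVI$_{\mathbf{n}}$.

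For step (i), rewrite GLE as a matrix system $\partial_{z} Y = L_{z}(z,\tau) Y$ and look for $L_{\tau}(z,\tau)$ such that $\partial_{\tau} Y = L_{\tau} Y$ has $\tau$-independent monodromy. The compatibility $\partial_{\tau} L_{z} - \partial_{z} L_{\tau} + [L_{z},L_{\tau}] = 0$ forces $L_{\tau}$ to be of Okamoto type, $L_{\tau} = \Omega(z,\tau) L_{z} + (\text{scalar correction})$, with $\Omega$ meromorphic on $E_{\tau}$. Because the only singularities of GLE with non-integer exponent difference are $\pm p$ (at $\frac{\omega_{k}}{2}$ the exponents $-n_{k},n_{k}+1$ are integers, so Remark \ref{rmk2-0} guarantees local single-valuedness for every $\tau$), $\Omega$ has at most simple poles at $\pm p$ and is regular elsewhere. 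The normalization dictated by the heat equations for $\wp$ and $\zeta$, combined with quasi-periodicity on $E_{\tau}$, fixes $\Omega$ uniquely to
\[
\Omega(z,\tau) \;=\; \tfrac{i}{4\pi}\bigl(\zeta(z+p;\tau) - \zeta(z-p;\tau) - 2p\,\eta_{1}(\tau)\bigr),
\]
where the $2p\,\eta_{1}(\tau)$ correction is exactly what is needed for $\Omega$ to descend as a genuine function on $E_{\tau}$.

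For step (ii), substitute this $\Omega$ into the compatibility identity and expand both sides in $z$ around $z=p$. Matching the principal parts yields
\[
\frac{dp}{d\tau} \;=\; \tfrac{-i}{4\pi}\bigl(2A - \zeta(2p;\tau) + 2p\,\eta_{1}(\tau)\bigr),
\]
which is the first Hamilton equation. The apparent-singularity constraint \eqref{101} eliminates $B$ in favor of $(p,A,\tau)$, and matching the subleading residues at $z=p$ produces the companion equation for $dA/d\tau$. Defining $\mathcal{H} = \tfrac{-i}{4\pi}(B + 2p\,\eta_{1}(\tau) A)$ and computing $\partial_{A}\mathcal{H}$, $\partial_{p}\mathcal{H}$ directly from \eqref{101} confirms that the two equations above are precisely $\dot p = \mathcal{H}_{A}$ and $\dot A = -\mathcal{H}_{p}$. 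For step (iii), differentiate $\dot p = \mathcal{H}_{A}$ in $\tau$, substitute $\dot A = -\mathcal{H}_{p}$, and replace each explicit $\tau$-derivative of $\wp, \wp', \zeta, \eta_{1}$ using the standard heat equations together with the Legendre relation $\tau \eta_{1} - \eta_{2} = 2\pi i$. All terms quadratic in $A$ should cancel, leaving exactly
\[
\frac{d^{2}p}{d\tau^{2}} \;=\; \tfrac{-1}{4\pi^{2}} \sum_{k=0}^{3} \tfrac{1}{2}\bigl(n_{k}+\tfrac{1}{2}\bigr)^{2}\, \wp'\!\bigl(p + \tfrac{\omega_{k}}{2};\tau\bigr),
\]
i.e.\ EPVI$_{\mathbf{n}}$. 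The converse direction is immediate: given $p(\tau)$ solving EPVI$_{\mathbf{n}}$, define $A(\tau)$ by the first Hamilton equation and $B(\tau)$ by \eqref{101}; the resulting $L_{\tau}$ automatically satisfies the zero-curvature condition, so the monodromy of GLE$(\mathbf{n},p(\tau),A(\tau),\tau)$ is preserved.

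The main obstacle is the bookkeeping in step (iii): verifying that, after the heat-equation substitutions and the elimination of $A$ through \eqref{101}, every $A$-dependent term telescopes away and the coefficients in front of the four $\wp'(p + \tfrac{\omega_{k}}{2};\tau)$ terms reassemble into the exact values $\tfrac{1}{2}(n_{k} + \tfrac{1}{2})^{2}$. This requires repeated use of addition formulas for $\zeta$ (to expand $\zeta(2p)$ and derivatives of $\zeta(p - \tfrac{\omega_{k}}{2})$) and careful tracking of the quasi-periodic correction in $\Omega$. A subtler point in step (i) is ruling out contributions to $\Omega$ at $z = \tfrac{\omega_{k}}{2}$: although $I_{\mathbf{n}}$ has double poles there, the integrality of the local exponents produced by $n_{k} \in \mathbb{N}$ (cf.\ Remark \ref{rmk2-0}) makes the local monodromy trivially isomonodromic, so no pole of $\Omega$ is needed at $\tfrac{\omega_{k}}{2}$.
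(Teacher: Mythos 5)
You should first note that the paper you are targeting contains no proof of this statement: it is imported verbatim from \cite[Theorems 1.1--1.3]{Chen-Kuo-Lin0}, so your proposal can only be measured against that reference. Your overall architecture does match it in outline: characterize isomonodromy of GLE \eqref{89-1} by a scalar deformation equation $y_{\tau}=\Omega\,y_{z}-\tfrac12\Omega_{z}\,y$, derive \eqref{142-0} from the compatibility identity $\partial_{\tau}I_{\mathbf{n}}=2\Omega_{z}I_{\mathbf{n}}+\Omega\,\partial_{z}I_{\mathbf{n}}-\tfrac12\Omega_{zzz}$ by matching principal parts at $\pm p$, and eliminate $A$ via the heat equations and the Legendre relation to land on \eqref{124} with $\alpha_{k}=\tfrac12(n_{k}+\tfrac12)^{2}$.

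The genuine error is your $\Omega$ in step (i), and it is not a bookkeeping issue. On a torus whose modulus deforms, the generator $\Omega$ \emph{cannot} be elliptic in $z$. Indeed, $I_{\mathbf{n}}(z+\tau;\tau)=I_{\mathbf{n}}(z;\tau)$ identically in $\tau$, so differentiating in $\tau$ gives $(\partial_{\tau}I_{\mathbf{n}})(z+\tau)=(\partial_{\tau}I_{\mathbf{n}})(z)-(\partial_{z}I_{\mathbf{n}})(z)$; substituting $z+\tau$ into the compatibility identity then forces $\Omega(z+\tau,\tau)=\Omega(z,\tau)-1$ and $\Omega(z+1,\tau)=\Omega(z,\tau)$. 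Equivalently: the double poles of the potential at $\tfrac{\tau}{2}$ and $\tfrac{1+\tau}{2}$ move with velocity $\tfrac12$ as $\tau$ deforms, and matching the $(z-\tfrac{\omega_{k}}{2})^{-3}$ terms forces $\Omega(\tfrac{\tau}{2})=\Omega(\tfrac{1+\tau}{2})=-\tfrac12$ while $\Omega(0)=\Omega(\tfrac12)=0$ --- so your remark that the half-periods impose no condition on $\Omega$ is wrong: trivial local monodromy removes the need for a pole there, but the \emph{value} of $\Omega$ there is still pinned to the velocity of the singular point. Your candidate $\Omega=\tfrac{i}{4\pi}\bigl(\zeta(z+p)-\zeta(z-p)-2p\eta_{1}\bigr)$ is genuinely elliptic (the quasi-periods of the two zeta terms cancel), so with it the compatibility fails identically: under $z\mapsto z+\tau$ its left side drops by $\partial_{z}I_{\mathbf{n}}\not\equiv0$ while its right side is unchanged. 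For the same reason your justification of the constant $-2p\eta_{1}$ (``needed for $\Omega$ to descend to $E_{\tau}$'') is vacuous, since $\zeta(z+p)-\zeta(z-p)$ already descends. The correct generator uses the \emph{sum}, with a correction linear in $z$: $\Omega(z,\tau)=\tfrac{-i}{4\pi}\bigl(\zeta(z+p;\tau)+\zeta(z-p;\tau)-2z\eta_{1}(\tau)\bigr)$. This $\Omega$ is odd in $z$, satisfies $\Omega(z+1)=\Omega(z)$ and, by the Legendre relation $\eta_{1}\tau-\eta_{2}=2\pi i$, $\Omega(z+\tau)=\Omega(z)-1$, and meets all the velocity constraints above; its simple poles at $\pm p$ with residue $\tfrac{-i}{4\pi}$ are admissible precisely because the exponents there are $-\tfrac12,\tfrac32$ (the $(z\mp p)^{-4}$ terms in the compatibility cancel exactly when the double-pole coefficient is $\tfrac34$), and then the $(z-p)^{-3}$ matching, using $\operatorname{Res}_{z=p}I_{\mathbf{n}}=-A$ and the finite part $\tfrac{-i}{4\pi}\bigl(\zeta(2p)-2p\eta_{1}\bigr)$ of $\Omega$ at $p$, yields exactly the first line of \eqref{142-0}; the $(z-p)^{-2}$ matching, after eliminating $B$ via \eqref{101}, gives the second. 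With your elliptic $\Omega$, step (ii) cannot produce these equations, and step (iii) inherits the failure; replacing your $\Omega$ by the one above and then carrying out your steps (ii)--(iii) as outlined would restore the argument along the lines of \cite{Chen-Kuo-Lin0}.
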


Recalling the solution $p_{r,s}^{\mathbf{n}}(\tau)$ of EPVI$_{\mathbf{n}}$ defined in Lemma B, 
the following result answers the natural question that when the monodromy of GLE$(\mathbf{n},p(\tau),A(\tau),\tau)$ satisifes \eqref{n1n2}.

\begin{theorem}\cite[Theorem 5.3]{Chen-Kuo-Lin} \label{thm-2-7}
For $\mathbf{n}$, let $p^{\mathbf{n}}(\tau)$ be a
solution to EPVI$_{\mathbf{n}}$ and $A^{\mathbf{n}}(\tau)$ be defined by the first equation of the Hamiltonian system \eqref{142-0}  (and the corresponding $B(\tau)$ via \eqref{101}). Then for any $\tau$ satisfying $p^{\mathbf{n}}
(\tau)\not \in E_{\tau}[2]$, the monodromy group of the
associated GLE$(\mathbf{n}$, $ p^{\mathbf{n}}(\tau), A^{\mathbf{n}}(\tau), \tau)$ is generated by
\begin{equation}
\rho(\gamma_{\pm})=-I_{2},\quad N_{1}=%
\begin{pmatrix}
e^{-2\pi is} & 0\\
0 & e^{2\pi is}%
\end{pmatrix}
\quad N_{2}=%
\begin{pmatrix}
e^{2\pi ir} & 0\\
0 & e^{-2\pi ir}%
\end{pmatrix}
 \label{II-101}%
\end{equation}
if and
only if $(r,s)  \in \mathbb{C}^{2}\backslash \frac
{1}{2}\mathbb{Z}^{2}$ and $p^{\mathbf{n}}(\tau)=p_{r,s}^{\mathbf{n}
}(\tau)$ in the sense of Remark \ref{identify}. 
\end{theorem}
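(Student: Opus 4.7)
The plan is to first establish the correspondence in the base case $\mathbf{n}=\mathbf{0}$ using Hitchin's explicit formula \eqref{513-1}, and then propagate to general $\mathbf{n}$ via the Okamoto transformations defining $p_{r,s}^{\mathbf{n}}(\tau)$. For $\mathbf{n}=\mathbf{0}$ I would construct explicit solutions of GLE$(\mathbf{0},p,A,\tau)$ using a Hermite--Halphen / sigma-function ansatz. Because the local exponent pair at $\pm p$ is $\{-\tfrac{1}{2},\tfrac{3}{2}\}$, a natural ansatz is of the form
\[
y(z)=\frac{\sigma(z-a;\tau)}{\sqrt{\sigma(z-p;\tau)\sigma(z+p;\tau)}}\,e^{cz},
\]
with $a,c$ determined by plugging into \eqref{89-1}; the apparent-singularity condition \eqref{101} on $B$ ensures two independent branches $y_\pm$ are meromorphic at $\pm p$. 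Reading off the monodromy along the fundamental cycles $\ell_j$ reduces to applying the quasi-periodicity $\sigma(z+\omega_j;\tau)=-e^{\eta_j(z+\omega_j/2)}\sigma(z;\tau)$, and after matching the resulting exponents to Hitchin's formula \eqref{513-1} one identifies the diagonal entries of $N_1,N_2$ as $e^{\mp 2\pi is}$ and $e^{\pm 2\pi ir}$ exactly when $p=p_{r,s}^{\mathbf{0}}(\tau)$.

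Next I would transfer the statement to general $\mathbf{n}$. By definition $p_{r,s}^{\mathbf{n}}(\tau)$ is obtained from $p_{r,s}^{\mathbf{0}}(\tau)$ through a composition of Okamoto transformations. At the level of the associated GLE, these correspond to Schlesinger-type integer-shift operations that modify the local exponents $\{-n_k,n_k+1\}$ at each $\omega_k/2$ but, because $n_k\in\mathbb{N}$ keeps those points as regular singularities with trivial ($=I_2$) local monodromy, they preserve the global monodromy representation on $\pi_1(E_\tau\setminus\{\pm p\},q_0)$ up to a common conjugation. Consequently the pair $(N_1,N_2)$ attached to $p_{r,s}^{\mathbf{n}}(\tau)$ is simultaneously diagonalisable with the same eigenvalue pairs $\{e^{\pm 2\pi is}\},\{e^{\pm 2\pi ir}\}$ and, after a further constant conjugation, takes the exact form \eqref{II-101}.

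For the converse I would invoke the rigidity of isomonodromic deformations. If GLE$(\mathbf{n},p^{\mathbf{n}}(\tau_0),A^{\mathbf{n}}(\tau_0),\tau_0)$ has monodromy \eqref{II-101} at some $\tau_0$, then by Theorem \ref{thm-2-6} it continues to do so along the whole deformation, and the Hamiltonian system \eqref{142-0} together with the Riemann--Hilbert correspondence for GLE determines $(p^{\mathbf{n}}(\tau),A^{\mathbf{n}}(\tau))$ uniquely from the monodromy data, modulo the $\pm,\ +m_1+m_2\tau$ equivalence of Remark \ref{identify}. Since $p_{r,s}^{\mathbf{n}}(\tau)$ already realises this monodromy, the two solutions must coincide in that sense. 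The hard part will be the explicit monodromy calculation in the base case: verifying that the quasi-periodicity exponents match \emph{precisely} with Hitchin's parameters $(r,s)$, and tracking the common eigenbasis through each Schlesinger shift so that Okamoto transformations preserve the labelled pair $(r,s)$ rather than only the unordered eigenvalue set.
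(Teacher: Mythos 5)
You should first note that this paper does not prove Theorem \ref{thm-2-7} at all: it is imported verbatim from \cite[Theorem 5.3]{Chen-Kuo-Lin}, so the only fair benchmark is the proof in that reference and its predecessor \cite{Chen-Kuo-Lin0}. Your overall architecture does match that of the literature: the base case $\mathbf{n}=\mathbf{0}$ is indeed handled there by a Hermite--Halphen type ansatz, essentially $y_{\pm}(z)=e^{\pm cz}\,\sigma(z\mp a;\tau)/\sqrt{\sigma(z-p;\tau)\sigma(z+p;\tau)}$ (note you need the \emph{pair} of eigenfunctions, not a single $y$), whose quasi-periodicity multipliers, via the Legendre relation, force $a=r+s\tau$ and tie $c$ to $Z_{r,s}(\tau)$, and whose compatibility with the ODE reproduces Hitchin's formula \eqref{513-1}. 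Two small corrections there: the apparentness condition \eqref{101} excludes logarithmic terms at $\pm p$ but does \emph{not} make solutions meromorphic there (the exponents $-\tfrac12,\tfrac32$ give $\rho(\gamma_{\pm})=-I_2$), and the sign of the square root along $\ell_1,\ell_2$ relative to the cut $L$ joining $\pm p$ must be tracked for the multipliers to come out as $e^{\mp 2\pi is}$, $e^{\pm 2\pi ir}$ exactly.

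The genuine gaps are in the other two steps, and they are precisely the hard content of the cited paper. First, your transfer to general $\mathbf{n}$ rests on the assertion that the Okamoto transformations ``preserve the global monodromy representation'' because the half-periods carry trivial local monodromy; that inference is invalid as stated. Trivial local monodromy at $\omega_k/2$ says nothing about whether a birational parameter-shift transformation of PVI solutions lifts to a monodromy-preserving operation on GLEs: one must actually realize the integer-shift Okamoto maps at the level of GLE$(\mathbf{n},p,A,\tau)$ by single-valued (elliptic-coefficient) gauge/Schlesinger transformations on $E_{\tau}\setminus\{\pm[p]\}$ and verify that they fix the \emph{labelled} pair $(r,s)$ rather than only the unordered eigenvalue sets --- you flag this as ``the hard part'' but supply no mechanism, and this is where a substantial portion of \cite{Chen-Kuo-Lin} lives. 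Second, your converse invokes ``the Riemann--Hilbert correspondence determines $(p^{\mathbf{n}}(\tau),A^{\mathbf{n}}(\tau))$ uniquely from the monodromy data.'' That fiberwise injectivity is not a generic rigidity property of isomonodromic deformations --- Theorem \ref{thm-2-6} only yields constancy of the monodromy along the flow, not uniqueness of $(p,A)$ at fixed $\tau$ --- and it is in fact the main theorem (the ``one-to-one'' of the title) of the very paper \cite{Chen-Kuo-Lin} whose result you are trying to prove, so assuming it makes the blind argument circular. A repair consistent with your plan: prove the gauge realization also for the inverse shifts, reduce the converse to $\mathbf{n}=\mathbf{0}$, where complete reducibility forces any common eigenfunction into the ansatz form, and the multipliers then determine $a=r+s\tau$ and hence $\wp(p)$ through \eqref{513-1}, giving injectivity elementarily; but without the gauge-realization step the argument does not close.
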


Theorem \ref{thm-2-7} indicates that for the solution $p_{r,s}^{\mathbf{n}}(\tau)$ of EPVI$_{\mathbf{n}}$, the $(r,s)$ represents precisely the mondromy data of the associated GLE. 
Theorems \ref{thm-2-6} and \ref{thm-2-7} will play crucial roles in our proof of Theorem \ref{theorem-1} in the next section.

\section{Proof of Theorem \ref{theorem-1}}

In this section, we establish the deep connection between the curvature equation \eqref{mfe-p} and GLE$(\mathbf{n},p,A,\tau)$, and give the proof of Theorem \ref{theorem-1}. The key step is to prove the following result.

\begin{theorem}\label{thm-3-1}
Suppose $n_{k}\in\N$ for all $k$ and given
$\tau \in \mathbb{H}$, and $p\not\in E_{\tau}[2]$. 
Then
the curvature equation \eqref{mfe-p}
$$
\Delta u+e^{u}=8\pi \sum_{k=0}^{3}n_{k}\delta_{\frac{\omega_{k}}{2}}+4\pi \left(  \delta_{p}+\delta_{-p}\right)  \quad\text{ on }\;
E_{\tau}$$
has even solutions (the assumption``even'' is not needed for the special case $\mathbf n=\mathbf 0$) if and only if there exists $A\in \mathbb{C}$ (and hence $B$ via \eqref{101}) such
that the monodromy of  GLE$(\mathbf{n},p,A,\tau)$ is unitary.
\end{theorem}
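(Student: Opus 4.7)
The strategy is the classical developing-map correspondence between a conformal metric of constant curvature $+1$ and a second-order linear ODE, adapted here to the torus with conic singularities.

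\emph{Necessity.} Suppose \eqref{mfe-p} admits an even solution $u$. Then $\tfrac12 e^u|dz|^2$ has curvature $+1$ on $E_\tau \setminus (\{\pm p\}\cup E_\tau[2])$, and the Hopf-type differential $\phi := 2u_{zz}-u_z^2$ satisfies $\phi_{\bar z}=0$ there by a direct computation using $\Delta u+e^u=0$. There is then a multi-valued developing map $f: E_\tau \setminus (\{\pm p\}\cup E_\tau[2])\to \mathbb{CP}^1$ such that
\[
e^u = \frac{8|f'|^2}{(1+|f|^2)^2} \quad\text{and}\quad \{f,z\}=\phi,
\]
and writing $f=y_1/y_2$ realizes $y_1,y_2$ as a basis of solutions of $y''=I(z)y$ with $I=-\tfrac12\phi$. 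Since $u$ is doubly periodic, $I(z)$ is elliptic on $E_\tau$. The prescribed strengths of the logarithmic singularities of $u$ at $\omega_k/2$ and $\pm p$ translate into local exponents $\{-n_k,n_k+1\}$ and $\{-\tfrac12,\tfrac32\}$ for $I$; moreover $\pm p$ must be apparent (since $f$ is meromorphic across $\pm p$), so by Remark \ref{rmk2-0} the Laurent data at $\pm p$ satisfies the compatibility condition \eqref{101}. Matching principal parts then identifies $I(z)=I_{\mathbf{n}}(z;p,A,\tau)$ for some $A\in\mathbb{C}$. Finally, single-valuedness of $u$ means that under analytic continuation along any loop, $f$ transforms by an isometry of the Fubini--Study metric, i.e.\ by an element of $PSU(2)$; lifted to $SL(2,\mathbb{C})$ and combined with $\rho(\gamma_\pm)=-I_2$ (from the apparent singularities), this gives unitary monodromy for GLE$(\mathbf{n},p,A,\tau)$.

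\emph{Sufficiency.} Conversely, suppose GLE$(\mathbf{n},p,A,\tau)$ has unitary monodromy. By Remark \ref{rmk2-1}, choose a basis $(y_1,y_2)$ with $N_1,N_2$ simultaneously in $SU(2)$, set $f := y_1/y_2$, and define
\[
u(z) := \log \frac{8|f'(z)|^2}{(1+|f(z)|^2)^2}.
\]
Unitarity makes $u$ single-valued on $E_\tau \setminus (\{\pm p\}\cup E_\tau[2])$, and $\{f,z\}=-2I_{\mathbf{n}}(z;p,A,\tau)$ translates to $\Delta u + e^u=0$ there. A Frobenius-type expansion of $f$ at each singular point, using the local exponents recorded in Remark \ref{rmk2-0}, produces exactly the Dirac masses $8\pi n_k\delta_{\omega_k/2}$ and $4\pi(\delta_p+\delta_{-p})$ on the right-hand side. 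For evenness, since $I_{\mathbf{n}}(-z;p,A,\tau)=I_{\mathbf{n}}(z;p,A,\tau)$, the involution $z\mapsto -z$ preserves the solution space and commutes with the monodromy; hence $f(-z)$ differs from $f(z)$ by an $SU(2)$ M\"obius transformation, under which $\log \frac{8|f'|^2}{(1+|f|^2)^2}$ is invariant, giving $u(-z)=u(z)$. For the special case $\mathbf{n}=\mathbf{0}$, one drops the evenness hypothesis as follows: apply the necessity construction to an arbitrary (not a priori even) solution to obtain an associated GLE with unitary monodromy, then use the sufficiency direction to build an even solution $\tilde u$ with the same singular data, and finally invoke a PDE uniqueness argument in the spirit of \cite{LW} to conclude $u=\tilde u$.

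\textbf{Main obstacle.} The hardest step is the bidirectional matching between the singular behavior of $u$ and the local-exponent data together with the apparent-singularity constraint \eqref{101} for $I(z)$ at the four half-periods and at $\pm p$; this requires careful Frobenius/Laurent analysis on both sides, in particular showing that the coefficient $B$ is forced to take the value prescribed by \eqref{101} purely from the finiteness-up-to-logarithm behavior of $u$ at $\pm p$. A second, more subtle point is the precise equivalence between ``monodromy of GLE in $SU(2)$'' and ``monodromy of $f$ in $PSU(2)$ preserving the round metric on $\mathbb{CP}^1$'', with the center $\{\pm I_2\}$ accounted for by the small loops $\gamma_\pm$ encircling the apparent singularities (which satisfy $\rho(\gamma_\pm)=-I_2$ and therefore project to the identity in $PSU(2)$).
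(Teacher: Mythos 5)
Your overall architecture (developing map $\Rightarrow$ GLE with unitary monodromy, and back) is the paper's, but there are genuine gaps at exactly the two places where the theorem's hypotheses do work. In your necessity argument, evenness of $u$ is never used, yet it is precisely what makes the identification of the Schwarzian with $-2I_{\mathbf{n}}(z;p,A,\tau)$ possible when $\mathbf{n}\neq\mathbf{0}$. A priori, $u_{zz}-\frac12u_z^2$ is only an elliptic function with at most double poles at $E_{\tau}[2]\cup\{\pm[p]\}$, and ``matching principal parts'' does not rule out simple-pole terms: the GLE potential has \emph{zero} residue at each $\frac{\omega_k}{2}$ and a \emph{single} parameter $A$ coupling the residues at $\pm p$ as $\pm 2A$ through the term $A(\zeta(z+p)-\zeta(z-p))$. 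The paper's proof invokes evenness exactly here: an even elliptic function has vanishing residues at the $2$-torsion points, and then the residue theorem forces $\operatorname{Res}_{p}=-\operatorname{Res}_{-p}=2A$. Without this step your $I(z)$ could carry four extra residue terms at the half-periods and two independent residues at $\pm p$, and would not be of GLE form. Relatedly, your treatment of $\mathbf{n}=\mathbf{0}$ is the wrong route: in that case the Schwarzian has poles only at $\pm p$, so the residue argument applies verbatim to an \emph{arbitrary} solution and nothing more is needed; your detour (build an even $\tilde u$ by sufficiency, then ``invoke a PDE uniqueness argument to conclude $u=\tilde u$'') would fail, because by Corollary \ref{coro-3-3} the equation admits a one-parameter family of solutions $u_{\beta}$, even only for $\beta=1$, and the uniqueness result of \cite{LW} is among \emph{even} solutions only.

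In the sufficiency direction, your evenness argument is incorrect as stated: the involution $z\mapsto-z$ does not commute with the monodromy, it conjugates $N_j$ to $N_j^{-1}$ (continuation along $\ell_j$ turns into continuation along $\ell_j^{-1}$). Writing $f(-z)=\frac{af(z)+b}{cf(z)+d}$ with $M=\bigl(\begin{smallmatrix}a&b\\ c&d\end{smallmatrix}\bigr)$, the relations $MN_jM^{-1}=N_j^{-1}$ for the diagonal matrices \eqref{n1n2} force $M$ to be antidiagonal but nothing more, i.e. $f(-z)=\lambda/f(z)$ with $\lambda\in\mathbb{C}^{*}$ arbitrary; invariance of $\log\frac{8|f'|^2}{(1+|f|^2)^2}$ under $f\mapsto\lambda/f$ requires $|\lambda|=1$, which is \emph{not} automatic — indeed it cannot be, since replacing $f$ by $\beta f$ produces the genuinely non-even solutions $u_{\beta}$ of Corollary \ref{coro-3-3}, so evenness depends on a normalization of the basis and not merely on unitarity. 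The paper secures it by taking $y_2(z)=y_1(-z)$ (possible by \cite[Remark 3.2]{CKL1}), which gives $f(-z)=1/f(z)$ identically; your version is repairable by rescaling $y_1$ so that $|\lambda|=1$, but as written the claim ``$f(-z)$ differs from $f(z)$ by an $SU(2)$ M\"obius transformation'' is unjustified. Two smaller points: your normalization $\phi:=2u_{zz}-u_z^2$ is inconsistent with $\{f;z\}=\phi$ (the Schwarzian equals $u_{zz}-\frac12u_z^2$); and in necessity you assert apparentness of $\pm p$ (``$f$ meromorphic across $\pm p$'') \emph{before} establishing unitarity, whereas the clean order, used in the paper, is to derive unitarity first from single-valuedness of $e^{-u/2}$ (via $2\sqrt{2}\,W e^{-u/2}=|y_1|^2+|y_2|^2$) and then observe that a logarithmic singularity at $\pm p$ would make the local monodromy non-unitarizable, which is what forces $B$ to be given by \eqref{101}.
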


\begin{proof}
{\bf Step 1.} We prove the necessary part.
Let $u(z)$ be a solution of (\ref{mfe-p}). Then
the classical Liouville theorem (cf. \cite[Section 1.1]{CLW}) says that there
is a local meromorphic function $f(z)$ away from $E_{\tau}[2]\cup\{\pm [p]\}$ such that%
\begin{equation}
u(z)=\log \frac{8|f^{\prime}(z)|^{2}}{(1+|f(z)|^{2})^{2}}. \label{502}%
\end{equation}
This $f(z)$ is called a developing map. By differentiating (\ref{502}), we
have
\begin{equation}
u_{zz}-\frac{1}{2}u_{z}^{2}= \{ f;z \}:=\left(  \frac{f^{\prime \prime}%
}{f^{\prime}}\right)  ^{\prime}-\frac{1}{2}\left(  \frac{f^{\prime \prime}%
}{f^{\prime}}\right)  ^{2}. \label{new22}%
\end{equation}
Conventionally, the RHS of this identity is called the Schwarzian derivative
of $f(z)$, denoted by $\{ f;z \}$. Note that outside
the singularities $E_{\tau}[2]\cup\{\pm [p]\}$,
\[
\left(  u_{zz}-\tfrac{1}{2}u_{z}^{2}\right)  _{\bar{z}}=\left(  u_{z\bar{z}%
}\right)  _{z}-u_{z}u_{z\bar{z}}=-\tfrac{1}{4}\left(  e^{u}\right)
_{z}+\tfrac{1}{4}e^{u}u_{z}=0.
\]
On the other hand, since $v(z):=u(z)-4n_k\ln|z-\tfrac{\omega_k}{2}|$ solves $\Delta v+|z-\frac{\omega_k}{2}|^{4n_k}e^v=0$ near $\frac{\omega_k}{2}$, it is standard (cf. \cite{B-M}) to see that $v(z)$ is regular at $\frac{\omega_k}{2}$, i.e. $$u(z)=4n_k\ln|z-\frac{\omega_k}{2}|+O(1)\quad\text{near }\;\frac{\omega_k}{2},$$
and then
$$u_{zz}-\frac{1}{2}u_{z}^{2}=\frac{-2n_{k}(n_{k}+1)}{(z-\frac{\omega_k}{2})^2}+O(\frac{1}{z-\frac{\omega_k}{2}})\quad\text{near }\;\frac{\omega_k}{2}.$$
Similarly,
$$u(z)=2\ln|z\pm p|+O(1)\quad\text{near }\;\mp p,$$
and
$$u_{zz}-\frac{1}{2}u_{z}^{2}=\frac{-3}{2(z\pm p)^2}+O(\frac{1}{z\pm p})\quad\text{near }\;\mp p.$$
 From here
we conclude that $u_{zz}-\frac{1}{2}u_{z}^{2}$ is an \emph{elliptic function} with at most \emph{double poles} at $E_\tau[2]\cup\{\pm[p]\}$.

Now we further assume that $u(z)$ is even (i.e. $u(z)=u(-z)$) for the general case $\mathbf n\neq \mathbf 0$. Then the (even) elliptic function $u_{zz}-\frac{1}
{2}u_{z}^{2}$ has no residues at $z\in E_{\tau}[2]$, i.e.
$$\underset{z=\frac{\omega_{k}}{2}}{\text{Res}}(u_{zz}-\frac{1}
{2}u_{z}^{2})=0,\quad\forall k=0,1,2,3.$$
Furthermore, since the sum of residues of any elliptic function is zero,  there is a constant $A=A(u)$ such that
\[
2A=\underset{z=p}{\text{Res}}(  u_{zz}-\frac{1}{2}u_{z}%
^{2})  =-\underset{z=-p}{\text{Res}}(  u_{zz}-\frac{1}{2}u_{z}%
^{2}),
\]
so the elliptic function
\begin{align*}u_{zz}-\frac{1}{2}u_{z}^{2}+2\Big[
&\sum_{k=0}^{3}n_{k}(n_{k}+1)\wp(z-\tfrac{\omega_{k}}{2};\tau)+\frac{3}{4}
(\wp(z+p;\tau)+\wp(z-p;\tau))\nonumber\\
&+A(\zeta(z+p;\tau)-\zeta(z-p;\tau))
\Big]\end{align*}
has no poles and hence must be a constant, which implies the existence of a constant $B=B(u)$ such that 
\begin{align}
u_{zz}-\frac{1}{2}u_{z}^{2}=-2\Big[
&\sum_{k=0}^{3}n_{k}(n_{k}+1)\wp(z-\tfrac{\omega_{k}}{2};\tau)+\frac{3}{4}
(\wp(z+p;\tau)+\wp(z-p;\tau))\nonumber\\
&+A(\zeta(z+p;\tau)-\zeta(z-p;\tau))+B
\Big]=-2I_{\mathbf{n}}(z;p,A,\tau). \label{cc-1}%
\end{align}
We will explain below that $B$ is actually given by \eqref{101}.

Since the  Schwarzian derivative $\{f;z\}=-2I_{\mathbf{n}}(z;p,A,\tau)$, a classical result in complex analysis says that there are linearly independent solutions $y_1(z), y_2(z)$ of GLE$(\mathbf{n},p,A,\tau)$ such that
\begin{equation}\label{fy}f(z)=\frac{y_1(z)}{y_2(z)}.\end{equation}
Define the Wronskian
\[W:=y_{1}'(z)y_2(z)-y_1(z)y_2'(z).\]
Then $W$ is a nonzero constant. By inserting (\ref{fy}) into (\ref{502}), a direct computation leads to
\[2\sqrt{2}W e^{-\frac{1}{2}u(z)}=|y_1(z)|^2+|y_2(z)|^2.\]
Note that as a solution of the curvature equation \eqref{mfe-p}, $u(z)$ is single-valued and doubly periodic, so $e^{-\frac{1}{2}u(z)}$ is invariant under analytic continuation along any loop $ \gamma\in\pi
_{1}(  E_{\tau}\backslash \{  \pm [p]\}  ,q_{0})$. This implies that for any monodromy matrix $\bigl(\begin{smallmatrix}a & b\\
c & d\end{smallmatrix}\bigr)\in SL(2,\mathbb{C})$ with respect to $(y_1(z), y_2(z))$, there holds
\[|ay_1(z)+by_2(z)|^2+|cy_1(z)+dy_2(z)|^2=|y_1(z)|^2+|y_2(z)|^2,\]
which easily infers $\bigl(\begin{smallmatrix}a & b\\
c & d\end{smallmatrix}\bigr)\in SU(2)$. Therefore, the monodromy of GLE$(\mathbf{n},p,A,\tau)$ is unitary. Remark that if GLE$(\mathbf{n},p,A,\tau)$ has logarithmic singularities at $\pm p$, then it is well-known that the local monodromy matrix at $\pm p$ can not be in $SU(2)$. Thus, GLE$(\mathbf{n},p,A,\tau)$ is apparent at $\pm p$, which implies that $B$ is given by \eqref{101}. 

{\bf Step 2.}
We prove the sufficient part.
Suppose  there exists $A\in \mathbb{C}$ (and $B$ via \eqref{101}) such
that the monodromy of  GLE$(\mathbf{n},p,A,\tau)$ is unitary. Then by Remark \ref{rmk2-1}, there exist a basis of solutions $(y_1(z), y_2(z))$ of  GLE$(\mathbf{n},p,A,\tau)$ and $(r,s)\in\mathbb{R}^2\setminus\frac{1}{2}\mathbb{Z}^2$ such that  the monodromy matrices $N_1, N_2$ with respect to $(y_1(z), y_2(z))$ are given by \eqref{n1n2}, namely
\begin{equation}
\ell_1^*\begin{pmatrix}
y_{1}(z)\\
y_{2}(z)
\end{pmatrix}
=\left(
\begin{matrix}
e^{-2\pi is} & 0\\
0 & e^{2\pi is}%
\end{matrix}
\right)
\begin{pmatrix}
y_{1}(z)\\
y_{2}(z)
\end{pmatrix}
, \label{61-34}%
\end{equation}%
\[
\ell_2^*\begin{pmatrix}
y_{1}(z)\\
y_{2}(z)
\end{pmatrix}
=\left(
\begin{matrix}
e^{2\pi ir} & 0\\
0 & e^{-2\pi ir}%
\end{matrix}
\right)
\begin{pmatrix}
y_{1}(z)\\
y_{2}(z)
\end{pmatrix}
.
\]
Note that if $y(z)$ is a solution of GLE$(\mathbf{n},p,A,\tau)$, then so is $y_1(-z)$.
It follows from \cite[Remark 3.2]{CKL1} that we can take $y_2(z)=y_1(-z)$.

Now we define
\begin{equation}\label{deve-f}f(z):=\frac{y_{1}(z)}{y_{2}(z)}=\frac{y_1(z)}{y_1(-z)},\end{equation}
and
\begin{equation}\label{deve-fff}u(z):=\log \frac{8|f^{\prime}(z)|^{2}}{(1+|f(z)|^{2})^{2}}.\end{equation}
We claim that this $u(z)$ is an even solution of (\ref{mfe-p}).

First, since the local exponents of $y_j$ at $\pm p$ both belong to $\{-\frac12, \frac32\}$, so the local exponents of $f$ at $\pm p$ must belong to $\{0, \pm 2\}$, which implies that $f$ is single-valued near $\pm p$ (although $y_j$ is multi-valued near $\pm p$). Therefore, after analytic continuation $f$ can be extended to be a meromorphic function in $\mathbb{C}$. Then \eqref{61-34} and \eqref{deve-f} imply that
$$f(z+\omega_1)=\ell_1^*f(z)=e^{-4\pi is}f(z),\quad f(z+\omega_2)=\ell_2^*f(z)=e^{4\pi ir}f(z).$$
From here and $(r,s)\in\mathbb{R}^2$, we see that $u(z)$ defined by \eqref{deve-fff} is doubly periodic and hence well-defined on $E_{\tau}$. Furthermore, it follows from \eqref{deve-f} that $f(-z)=\frac{1}{f(z)}$, which infers that $u(z)=u(-z)$. 

Notice that by \eqref{deve-f}, a direct computation gives
\begin{align}\label{swaz}\left(  \frac{f^{\prime \prime}%
}{f^{\prime}}\right)  ^{\prime}-&\frac{1}{2}\left(  \frac{f^{\prime \prime}%
}{f^{\prime}}\right)  ^{2}=-2I_{\mathbf{n}}(z;p,A,\tau)=-2\Big[
\sum_{k=0}^{3}n_{k}(n_{k}+1)\wp(z-\tfrac{\omega_{k}}{2};\tau)\nonumber\\
&+\frac{3}{4}
(\wp(z+p;\tau)+\wp(z-p;\tau))+A(\zeta(z+p;\tau)-\zeta(z-p;\tau))+B
\Big].\end{align}

Fix any $a\in E_{\tau}\setminus(E_{\tau}[2]\cup\{\pm[p]\})$, we claim that $u(a)\neq \infty$.
In fact, since the local exponent of $y_j$ at $a$ belongs to $\{0,1\}$, so the local exponent of $f$ at $a$ must belong to $\{0, \pm 1\}$. If the local exponent is $\pm 1$, i.e.
$f(z)\sim (z-a)^{\pm 1}$ at $a$, then it follows from \eqref{deve-fff} that $u(a)\neq \infty$. If the local exponent is $0$, i.e. $f(z)=f(a)+c(z-a)^m+O((z-a)^{m+1})$ for some $m\geq 1$ and $c\neq 0$, then by inserting this formula into \eqref{swaz}, we easily obtain $m=1$ and so \eqref{deve-fff} implies $u(a)\neq \infty$.
Consequently, a direct computation via \eqref{deve-fff} implies
\[\Delta u+e^u=0\quad \text{on}\; E_{\tau}\setminus(E_{\tau}[2]\cup\{\pm[p]\}).\]

Recall that the local exponent of $f$ at $a\in\{\pm p\}$ must belong to $\{0, \pm 2\}$. If the local exponent is $\pm 2$, i.e.
$f(z)\sim (z-a)^{\pm 2}$ at $a\in\{\pm p\}$, then it follows from \eqref{deve-fff} that \begin{equation}\label{fcfc}u(z)=2\ln|z-a|+O(1)\quad\text{near }\;a\in\{\pm p\}.\end{equation} If the local exponent of $f$ at $a\in\{\pm p\}$ is $0$, i.e. $f(z)=f(a)+c(z-a)^m+O((z-a)^{m+1})$ for some $m\geq 1$ and $c\neq 0$, then by inserting this formula into \eqref{swaz}, we easily obtain $m=2$ and so \eqref{deve-fff} implies \eqref{fcfc} again.

Similarly, since the local exponent of $y_j$ at $\frac{\omega_k}{2}$ both belong to $\{-n_k, n_k+1\}$, so the local exponent of $f$ at $\frac{\omega_k}{2}$ must belong to $\{0, \pm (2n_k+1)\}$.  If the local exponent is $\pm (2n_k+1)$, i.e.
$f(z)\sim (z-\frac{\omega_k}{2})^{\pm (2n_k+1)}$ at $\frac{\omega_k}{2}$, then it follows from \eqref{deve-fff} that \begin{equation}\label{fcfc1}u(z)=4n_k\ln|z-\frac{\omega_k}{2}|+O(1)\quad\text{near }\;\frac{\omega_k}{2}.\end{equation} If the local exponent of $f$ at $\frac{\omega_k}{2}$ is $0$, i.e. $f(z)=f(\frac{\omega_k}{2})+c(z-\frac{\omega_k}{2})^m+O((z-\frac{\omega_k}{2})^{m+1})$ for some $m\geq 1$ and $c\neq 0$, then by inserting this formula into \eqref{swaz}, we easily obtain $m=2n_k+1$ and so \eqref{deve-fff} implies \eqref{fcfc1} again.

In conclusion, the above argument shows that $u(z)$ is an even solution of the curvature equation (\ref{mfe-p}).
The proof is complete.
\end{proof}

\begin{remark}
Let us turn back to Step 2 of the proof of Theorem \ref{thm-3-1}. Like \eqref{deve-fff}, we define
\begin{equation}\label{deve-ffff}u_\beta(z):=\log \frac{8|\beta f^{\prime}(z)|^{2}}{(1+|\beta f(z)|^{2})^{2}},\quad\forall\beta>0,\end{equation}
namely we replace $f(z)$ with $\beta f(z)$ in \eqref{deve-fff}. Then exactly the same argument as Step 2 above shows that $u_\beta(z)$ is a solution of the curvature equation \eqref{mfe-p} for any $\beta>0$. The only different thing for $\beta\neq 1$ is that, since $\beta f(-z)\neq \frac{1}{\beta f(z)}$ for $\beta\neq 1$, we see that $u_\beta(z)$ is not even for $\beta\neq 1$. Therefore, we obtain
\end{remark}

\begin{corollary}\label{coro-3-3}
Suppose $n_{k}\in\N$ for all $k$ and given
$\tau \in \mathbb{H}$, and $p\not\in E_{\tau}[2]$. 
Suppose
the curvature equation \eqref{mfe-p}
has an even solution (the assumption``even'' is not needed for the special case $\mathbf n=\mathbf 0$). Then \eqref{mfe-p} has a 1-parameter family of solutions $u_{\beta}(z)$, where $\beta>0$ is arbitrary. Furthermore, $u_{\beta}(z)$ is even if and only if $\beta=1$.
\end{corollary}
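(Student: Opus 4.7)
The plan is to bootstrap Step 2 of the proof of Theorem \ref{thm-3-1}: replace the developing map $f(z)$ by $\beta f(z)$ with $\beta>0$ and show that the same construction still yields a solution of \eqref{mfe-p}, with evenness now surviving only at $\beta=1$. Starting from the given even solution, the necessity direction of Theorem \ref{thm-3-1} supplies $A\in\mathbb{C}$ for which GLE$(\mathbf n,p,A,\tau)$ has unitary monodromy. I would then invoke the Step 2 construction to fix a basis of solutions $y_1,y_2$ of the GLE with $y_2(z)=y_1(-z)$ and diagonal monodromies as in \eqref{n1n2}, producing a developing map $f(z):=y_1(z)/y_2(z)$ that is single-valued meromorphic on $\mathbb{C}$, satisfies the anti-symmetry $f(-z)=1/f(z)$, and is multiplicatively quasi-periodic:
\[
f(z+\omega_1)=e^{-4\pi is}f(z),\qquad f(z+\omega_2)=e^{4\pi ir}f(z).
\]

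To see that each $u_\beta$ defined by \eqref{deve-ffff} solves \eqref{mfe-p}, I would invoke the M\"obius invariance of the Schwarzian derivative, namely $\{\beta f;z\}=\{f;z\}=-2I_{\mathbf n}(z;p,A,\tau)$. Consequently every step of the Step 2 argument — the ODE computation, the local-exponent analysis at $\pm p$ and at $\omega_k/2$, and the accounting of the Dirac masses — transfers verbatim with $\beta f$ in place of $f$. Double periodicity of $u_\beta$ on $E_\tau$ persists because $\beta>0$ is real and the quasi-periodicity factors of $f$ have modulus one, so $|\beta f(z+\omega_j)|=|\beta f(z)|$ and $|(\beta f)'(z+\omega_j)|=|(\beta f)'(z)|$.

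For the evenness dichotomy, I would differentiate $f(-z)=1/f(z)$ to obtain $f'(-z)=f'(z)/f(z)^2$, and substitute directly in \eqref{deve-ffff} to get
\[
u_\beta(-z)=\log\frac{8\beta^2|f'(z)|^2}{(|f(z)|^2+\beta^2)^2}.
\]
Comparing with $u_\beta(z)=\log\frac{8\beta^2|f'(z)|^2}{(1+\beta^2|f(z)|^2)^2}$, the identity $u_\beta(-z)=u_\beta(z)$ is equivalent to the pointwise condition $(1-\beta^2)(|f(z)|^2-1)\equiv 0$ on $E_\tau\setminus(E_\tau[2]\cup\{\pm[p]\})$. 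Since $f'=W/y_2^2$ with nonzero Wronskian $W$, the meromorphic function $f$ is non-constant, and hence the open set $\{z\mid |f(z)|\neq 1\}$ is nonempty; this forces $\beta^2=1$, so $\beta=1$. Conversely, $\beta=1$ reduces the construction to Step 2 and recovers the original even solution. The only point genuinely beyond Step 2 is this last algebraic comparison together with the non-constancy of $f$; neither is delicate, so no real obstacle arises.
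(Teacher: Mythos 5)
Your proposal is correct and takes essentially the same route as the paper, which in the remark immediately preceding Corollary \ref{coro-3-3} likewise replaces $f$ by $\beta f$ in \eqref{deve-fff} and reruns Step~2 of the proof of Theorem \ref{thm-3-1} verbatim, the Schwarzian derivative and the unimodular quasi-periodicity factors being unchanged under this scaling. If anything, your explicit reduction of the evenness condition to $(1-\beta^{2})\left(1-|f(z)|^{2}\right)\equiv 0$, settled by the non-constancy of $f$ via $f'=W/y_{2}^{2}$, is a more rigorous rendering of the paper's terse observation that $\beta f(-z)\neq \frac{1}{\beta f(z)}$ for $\beta\neq 1$; only your closing remark that $\beta=1$ ``recovers the original even solution'' overstates what is needed (and what the construction guarantees), since the corollary merely requires $u_{1}$ to be even, which your symmetry computation already provides.
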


The same statement as Corollary \ref{coro-3-3} was proved firstly in \cite{CLW} for the curvature equation with a single singularity $$\Delta u+e^u=8\pi n\delta_0\quad\text{on }E_{\tau},\quad n\in\mathbb{N}_{\geq 1}. $$

\begin{remark}\label{rmk3-4}
It is easy to see from \eqref{deve-ffff} that $\lim\limits_{\beta\to+\infty}u_\beta(z)=+\infty$ if $z$ is a simple zero of $f$, and similarly, $\lim\limits_{\beta\to0}u_\beta(z)=+\infty$ if $z$ is a simple pole of $f$, namely the solution sequence $u_\beta$ blows up as $\beta\to +\infty$ or $\beta\to 0$. An interesting problem is whether $u_\beta$ might blow up at some singularities (say $\pm p$ for example). We should study this problem in future.
\end{remark}

Now we are ready to prove Theorem \ref{theorem-1}.

\begin{proof}[Proof of Theorem \ref{theorem-1}]
First we prove the necessary part. Given
$\tau_0 \in \mathbb{H}$ and $p\not\in E_{\tau_0}[2]$. 
Suppose the curvature equation
\begin{equation}\label{mfe-p0}
\Delta u+e^{u}=8\pi \sum_{k=0}^{3}n_{k}\delta_{\frac{\omega_{k}}{2}}+4\pi \left(  \delta_{p}+\delta_{-p}\right)  \quad\text{ on }\;
E_{\tau_0}\end{equation}
has an even
solution $u(z)$ (the assumption ``even'' is not needed for the case $\mathbf n=\mathbf 0$). Then by Theorem \ref{thm-3-1}, there exists $A\in \mathbb{C}$ (and hence $B$ via \eqref{101}) such
that the monodromy of  GLE$(\mathbf{n},p,A,\tau_0)$
\begin{align} \label{89-11}
y''=\Big[
&\sum_{k=0}^{3}n_{k}(n_{k}+1)\wp(z-\tfrac{\omega_{k}}{2};\tau_0)+\frac{3}{4}
(\wp(z+p;\tau_0)+\wp(z-p;\tau_0))\\
&+A(\zeta(z+p;\tau_0)-\zeta(z-p;\tau_0))+B
\Big]  y\nonumber
\end{align}
is unitary. In particular, it follows from Remark \ref{rmk2-1} that there exists $(r,s)\in\mathbb{R}^2\setminus\frac{1}{2}\mathbb{Z}^2$ such that the monodromy matrices $N_1, N_2$ of GLE$(\mathbf{n},p,A,\tau_0)$ are given by \eqref{n1n2}.
On the other hand, the initial value problem for the
Hamiltonian system (\ref{142-0}) with%
\[
p(\tau_{0})=p,\text{ \ }A(\tau_{0})=A
\]
has a solution $(p(\tau),A(\tau))$. Applying Theorem \ref{thm-2-6}, we see that $p(\tau)$
is a solution of EPVI$_{\mathbf n}$.
Furthermore, by defining $B=B(\tau)$ via (\ref{101}) with $(p(\tau),A(\tau))$,
the corresponding GLE$(\mathbf{n},p(\tau),A(\tau),\tau)$ is
monodromy preserving and coincides with GLE$(\mathbf{n},p,A,\tau_0)$ at $\tau=\tau_{0}$. Thus,
the monodromy matrices $N_1, N_2$ of GLE$(\mathbf{n},p(\tau)$, $A(\tau),\tau)$ are also given by \eqref{n1n2}. From here and Theorem \ref{thm-2-7}, we conclude that $p(\tau)=p_{r,s}^{\mathbf{n}
}(\tau)$ in the sense of Remark \ref{identify}. In particular, $p=p(\tau_0)=\pm p_{r,s}^{\mathbf{n}
}(\tau_0)\in \Omega_{\tau_0}^{\mathbf n}\setminus E_{\tau_0}[2]$. This proves the
necessary part.

Now we prove the sufficient part. Suppose $p\in \Omega_{\tau_0}^{\mathbf n}\setminus E_{\tau_0}[2]$, namely there is $(r,s)\in\mathbb{R}^2\setminus\frac{1}{2}\mathbb{Z}^2$ such that $p=\pm p_{r,s}^{\mathbf{n}
}(\tau_0)$. By replacing $p$ with $-p$ if necessary, we may assume $p=p_{r,s}^{\mathbf{n}
}(\tau_0)$. By defining $A=A(\tau_0)$ via the first equation of the
Hamiltonian system (\ref{142-0}) at $\tau=\tau_0$ (and hence $B$ via \eqref{101} with $\tau=\tau_0$), it follows from Theorem \ref{thm-2-7} that the monodromy matrices $N_1, N_2$ of GLE$(\mathbf{n},p$, $A,\tau_0)$ are given by \eqref{n1n2}, namely the monodromy of GLE$(\mathbf{n},p$, $A,\tau_0)$ is unitary. Then we conclude from Theorem \ref{thm-3-1} that the curvature equation \eqref{mfe-p0} has an even solution.
The proof is complete.
\end{proof}

\section{Proof of Theorems \ref{main-thm2} and\ref{thm=4-3}}

In this section, we consider $\mathbf n=\mathbf 0$ and prove Theorems \ref{main-thm2} and \ref{thm=4-3}. 
First, we recall the following result for the Green function $G(z)=G(z;\tau)$ when $\tau\in i\mathbb{R}_{>0}$.

\begin{theorem}\cite{LW}\label{thm-LW}
For $\tau\in i\mathbb{R}_{>0}$, i.e. $E_{\tau}$ is a rectangular torus, the Green function $G(z)$ has exactly $3$ critical points $\frac{\omega_k}{2}$, $k=1,2,3$. Furthermore, all the $3$ critical points are non-degenerate.
\end{theorem}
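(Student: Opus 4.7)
The plan is to combine the additional reflection symmetry that rectangular tori enjoy with Theorem~C to exclude nontrivial critical points, and then to verify non-degeneracy at the three half-periods by a Hessian computation.

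Since the lattice $\mathbb{Z}+ib\mathbb{Z}$ is invariant under complex conjugation, the uniqueness of $G$ as the mean-zero solution of $-\Delta G=\delta_{0}-|E_{\tau}|^{-1}$ forces $G(\bar z;\tau)=G(z;\tau)$ in addition to the standard evenness $G(-z;\tau)=G(z;\tau)$. Thus if $p$ were a nontrivial critical point, then so would be $-p$, $\bar p$, and $-\bar p$; since Theorem~C permits at most one pair $\{p,-p\}$ of nontrivial critical points, we are forced into $\bar p=\pm p$. Equivalently, $p$ must lie on one of the four symmetry circles $\{\mathrm{Im}\,z=0\}$, $\{\mathrm{Im}\,z=\tfrac{b}{2}\}$, $\{\mathrm{Re}\,z=0\}$, $\{\mathrm{Re}\,z=\tfrac12\}$ inside $E_{\tau}$.

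I would then exclude such $p$ by a one-variable analysis on each circle. Using the classical representation $G(z;\tau)=-\tfrac{1}{2\pi}\log|\vartheta_{1}(z;\tau)|+\tfrac{(\mathrm{Im}\,z)^{2}}{2b}+c(\tau)$, critical points of $G$ on the real circle $\{\mathrm{Im}\,z=0\}$ correspond exactly to zeros of $\vartheta_{1}'(\,\cdot\,;ib)$ on $(0,1)$, and the strict log-concavity of $x\mapsto\vartheta_{1}(x;ib)$ selects only the unique maximum $x=\tfrac12$. The other three circles are treated by the same method, with minor modifications on the vertical circles to accommodate the $y^{2}/(2b)$ contribution; in each case the only interior critical point is the half-period lying on the circle. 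The step I expect to be the main obstacle is the strict log-concavity of $\vartheta_{1}(\,\cdot\,;ib)$ on $(0,1)$ for every $b>0$, since the term-by-term series for $(\log\vartheta_{1})'$ is not manifestly monotone; a tractable route uses the identity $(\log\vartheta_{1})''(z;\tau)=-\wp(z;\tau)-\eta_{1}(\tau)$ and reduces matters to a sign analysis of $\wp(\,\cdot\,;ib)+\eta_{1}(ib)$ on the relevant real/imaginary segments, which is tractable from the known monotonicity of $\wp$ between its half-periods in the rectangular case.

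To finish, I would verify non-degeneracy at $\omega_{k}/2$ by computing the real Hessian of $G$ from the identities $\partial_{z}^{2}G(z;\tau)=\tfrac{1}{4\pi}\bigl(\wp(z;\tau)+\eta_{1}(\tau)\bigr)$ and $\partial_{z}\partial_{\bar z}G(z;\tau)=\tfrac{1}{4b}$, both valid off the singularity. The resulting Hessian at $\omega_{k}/2$ is diagonal with eigenvalues $\tfrac{1}{2\pi}\bigl(e_{k}(ib)+\eta_{1}(ib)\bigr)\pm\tfrac{1}{2b}$, and the non-vanishing $e_{k}(ib)+\eta_{1}(ib)\neq\pm\pi/b$ for $k=1,2,3$ follows from the Legendre relation $ib\,\eta_{1}(ib)-\eta_{2}(ib)=2\pi i$ together with $\eta_{1}(ib)\in\mathbb{R}$, $\eta_{2}(ib)\in i\mathbb{R}$ and the strict real inequalities $e_{2}(ib)<e_{3}(ib)<e_{1}(ib)$. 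Combined with the exclusion of nontrivial critical points above, this yields Theorem~\ref{thm-LW}.
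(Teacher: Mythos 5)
Your proposal cannot be checked against a proof in this paper: Theorem \ref{thm-LW} is imported from \cite{LW} without proof, so the benchmark is Lin--Wang's original argument, whose elementary part (conjugation symmetry of rectangular tori, reduction to the fixed lines of the reflections, one-variable analysis of the critical-point equation $\zeta(r+s\tau)=r\eta_1+s\eta_2$ coming from \eqref{G_z}) is indeed close in spirit to your plan. Your appeal to Theorem~C to force $\bar p\equiv\pm p$ is a much heavier tool than \cite{LW} needs, but it is not circular, since \cite{LW} proves Theorem~C via the PDE correspondence independently of the rectangular classification. The genuine problems are in the execution.

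First, your Hessian identity is wrong, and demonstrably so. Differentiating \eqref{G_z} you must treat $r,s$ as functions of $z$: for $\tau=ib$ one has $\partial_z r=\tfrac12$, $\partial_z s=\tfrac{1}{2ib}$, and the Legendre relation $ib\,\eta_1-\eta_2=2\pi i$ then gives $\partial_z^2G=\tfrac{1}{4\pi}\bigl(\wp(z)+\eta_1\bigr)-\tfrac{1}{4b}$, not $\tfrac{1}{4\pi}\bigl(\wp(z)+\eta_1\bigr)$; equivalently, in the theta representation the term $(\mathrm{Im}\,z)^2/(2b)$ contributes $-\tfrac{1}{4b}$ to $\partial_z^2$. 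Consequently the Hessian eigenvalues at $\tfrac{\omega_k}{2}$ are $\tfrac{1}{2\pi}(e_k+\eta_1)$ and $\tfrac1b-\tfrac{1}{2\pi}(e_k+\eta_1)$, and nondegeneracy means $e_k+\eta_1\notin\{0,\tfrac{2\pi}{b}\}$, not your condition $e_k+\eta_1\neq\pm\tfrac{\pi}{b}$. A concrete test kills your formula: at $\tau=i$ one has $e_3=0$ and $\eta_1=\pi$, so your eigenvalues $\tfrac{1}{2b}\pm\tfrac{1}{2\pi}(e_3+\eta_1)=\{0,1\}$ would make $\tfrac{\omega_3}{2}$ degenerate, contradicting the very statement being proved, whereas the correct Hessian there is $\tfrac12 I_2$.

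Second, the closing claim that non-vanishing ``follows from the Legendre relation together with $\eta_1\in\R$, $\eta_2\in i\R$ and $e_2<e_3<e_1$'' is not a proof: these soft facts are compatible with equalities such as $e_3+\eta_1=0$. What is actually needed, both for your log-concavity step and for nondegeneracy, is quantitative, namely $e_2+\eta_1<0<e_3+\eta_1<\tfrac{2\pi}{b}<e_1+\eta_1$; note that $\eta_1(ib)<0$ for small $b$ (from $\eta_1(i/b)=b\,(2\pi-b\,\eta_1(ib))$ and $\eta_1\to\pi^2/3$ at the cusp), so naive positivity fails and these inequalities carry real content --- they are precisely the lemmas proved in \cite{LW}. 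They are tractable, e.g. $e_1+\eta_1=-\vartheta_1''(\tfrac12)/\vartheta_1(\tfrac12)=\pi^2\sum_{n\geq0}(2n+1)^2q^{(n+\frac12)^2}\big/\sum_{n\geq0}q^{(n+\frac12)^2}\geq\pi^2$ with $q=e^{-\pi b}$, the others following by the $b\mapsto 1/b$ duality, but your sketch omits them. Finally, strict concavity/convexity of the restriction of $G$ is structurally impossible on two of your four circles: $\{\mathrm{Im}\,z=\tfrac b2\}$ and $\{\mathrm{Re}\,z=\tfrac12\}$ contain two half-periods each and no singularity, so the smooth restriction already has a maximum and a minimum; indeed $\tfrac{d^2}{dx^2}G(x+\tfrac{ib}{2})=\tfrac{1}{2\pi}\bigl(\wp(x+\tfrac{ib}{2})+\eta_1\bigr)$ sweeps the interval $[\tfrac{e_2+\eta_1}{2\pi},\tfrac{e_3+\eta_1}{2\pi}]$, which contains $0$. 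The repair is to apply convexity to the critical-point function rather than to $G$: for instance $h(x)=\zeta(x+\tfrac{ib}{2})-\tfrac{\eta_2}{2}-\eta_1 x$ is real, vanishes at $x=0,\tfrac12$, and is strictly concave on $(0,\tfrac12)$ since $h''=-\tfrac{d}{dx}\wp(x+\tfrac{ib}{2})<0$, hence is nonzero in between. With these corrections your outline can be completed, but as written the nondegeneracy computation is incorrect and the exclusion argument fails on half of the symmetry circles.
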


The Green function $G(z)$ on $E_{\tau}$ can be expressed explicitly in terms of elliptic
functions. In particular, by using the complex variable $z\in\mathbb C$, it was proved in \cite{LW} that
\begin{equation}
\label{G_z}-4\pi \frac{\partial G}{\partial z}(z;\tau)=\zeta(z;\tau)-r\eta_{1}(\tau)
-s\eta_{2}(\tau),
\end{equation}
where $(r,s)$ is defined by  $$z=r+s\tau\quad\text{ with }\quad r,s\in \mathbb{R}.$$
Note that $z\notin E_{\tau}[2]$ is equivalent to $(r,s)\in\mathbb{R}^2\setminus\frac12\mathbb{Z}^2$.
Using this formula and Hitchin's formula \eqref{513-1}, we can prove Theorem \ref{thm=4-3}.

\begin{theorem}[=Theorem \ref{thm=4-3}]\label{thm4-1}
Given $p\in E_{\tau}\setminus E_{\tau}[2]$ and $(r,s)\in\mathbb{R}^2\setminus\frac12\mathbb{Z}^2$. Then $a:=r+s\tau$ is a nontrivial critical point of $G_p$ if and only if
\begin{equation}\label{a+p1}\wp(p;\tau)=\wp(r+s\tau;\tau)+\frac{\wp^{\prime
}(r+s\tau;\tau)}{2Z_{r,s}(\tau)  }.\end{equation}
Consequently, $G_p(z)$ has nontrivial critical points if and only if $p\in \Omega_{\tau}^{\mathbf 0}$, if and only if
$$\Delta u+e^u=4\pi(\delta_p+\delta_{-p})\quad\text{on }E_{\tau}$$
has solutions.
\end{theorem}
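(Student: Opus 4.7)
The strategy is to translate the critical-point condition $\partial_z G_p(a)=0$ into the Hitchin identity (4.2) via a short calculation with the $\zeta$-function, and then derive the two corollaries directly from Hitchin's formula and Theorem 3.1. Since $G_p(z)=\tfrac12(G(z-p)+G(z+p))$ and $G$ is even, for $a\in E_\tau\setminus(\{\pm p\}\cup E_\tau[2])$ the condition $\partial_z G_p(a)=0$ is equivalent to $\partial_z G(a-p)+\partial_z G(a+p)=0$.

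First I would apply formula (4.1) to $w=a\pm p$. Writing $a=r+s\tau$ and $p=p_1+p_2\tau$ with $r,s,p_1,p_2\in\mathbb R$, the coordinates of $a\pm p$ are $(r\pm p_1, s\pm p_2)$, so when I add the two copies of (4.1) the $p_1,p_2$ contributions cancel, leaving
\[
-4\pi\bigl(\partial_z G(a-p)+\partial_z G(a+p)\bigr)=\zeta(a-p)+\zeta(a+p)-2r\eta_1-2s\eta_2.
\]
Substituting $r\eta_1+s\eta_2=\zeta(a)-Z_{r,s}(\tau)$ converts the critical-point condition into
\[
\zeta(a-p)+\zeta(a+p)-2\zeta(a)+2Z_{r,s}(\tau)=0.
\]

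Next I would invoke the classical Weierstrass addition formula $\zeta(u+v)-\zeta(u)-\zeta(v)=\tfrac12\frac{\wp'(u)-\wp'(v)}{\wp(u)-\wp(v)}$ at $(u,v)=(a,p)$ and $(a,-p)$; summing and using that $\zeta,\wp'$ are odd while $\wp$ is even telescopes to
\[
\zeta(a-p)+\zeta(a+p)-2\zeta(a)=\frac{\wp'(a)}{\wp(a)-\wp(p)}.
\]
Therefore $a$ is a critical point of $G_p$ if and only if $\wp'(a)=-2Z_{r,s}(\tau)\bigl(\wp(a)-\wp(p)\bigr)$. Since the hypothesis $(r,s)\in\mathbb R^2\setminus\tfrac12\mathbb Z^2$ guarantees $Z_{r,s}(\tau)\neq 0$ (and, equivalently, $a\notin E_\tau[2]$, i.e.\ nontriviality of the critical point), I can solve for $\wp(p)$ and obtain exactly the Hitchin identity
\[
\wp(p;\tau)=\wp(r+s\tau;\tau)+\frac{\wp'(r+s\tau;\tau)}{2Z_{r,s}(\tau)}.
\]

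For the two remaining equivalences: comparing the displayed identity with Hitchin's formula (5.13-1) shows that $\wp(p;\tau)=\wp(p_{r,s}^{\mathbf 0}(\tau);\tau)$; since $\wp(\cdot;\tau)$ is $2$-to-$1$, this forces $p=\pm p_{r,s}^{\mathbf 0}(\tau)$, which by definition of $\Omega_\tau^{\mathbf 0}$ is exactly $p\in\Omega_\tau^{\mathbf 0}$. Ranging over $(r,s)$, $G_p$ admits a nontrivial critical point iff $p\in\Omega_\tau^{\mathbf 0}$. The final equivalence with solvability of $\Delta u+e^u=4\pi(\delta_p+\delta_{-p})$ on $E_\tau$ is then just the $\mathbf n=\mathbf 0$ case of Theorem \ref{theorem-1}, where the paper has already established that the ``even'' hypothesis can be dropped. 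I do not expect a substantial obstacle here: the only nontrivial ingredient is the $\zeta$-addition computation, and it is classical; the rest is bookkeeping together with the results already proved.
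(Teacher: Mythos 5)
Your argument is essentially the paper's proof: the same reduction of $\nabla G_p(a)=0$ to $\zeta(a+p)+\zeta(a-p)-2(r\eta_1+s\eta_2)=0$ via \eqref{G_z} (the paper does not even need to introduce coordinates $(p_1,p_2)$ for $p$, but your cancellation computation is a correct way to get the same identity \eqref{a+1p}), the same Weierstrass addition formula
\[
\zeta(a+p)+\zeta(a-p)-2\zeta(a)=\frac{\wp'(a)}{\wp(a)-\wp(p)},
\]
and the same derivation of the two corollaries from Hitchin's formula \eqref{513-1}, the definition \eqref{fc-om} of $\Omega_\tau^{\mathbf 0}$, and Theorem \ref{theorem-1} with $\mathbf n=\mathbf 0$.

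There is, however, one genuinely false step in your write-up: the claim that the hypothesis $(r,s)\in\mathbb{R}^2\setminus\tfrac12\mathbb{Z}^2$ guarantees $Z_{r,s}(\tau)\neq 0$. By \eqref{G_z} one has $Z_{r,s}(\tau)=-4\pi\frac{\partial G}{\partial z}(r+s\tau;\tau)$, so $Z_{r,s}(\tau)=0$ with $(r,s)\in\mathbb{R}^2\setminus\tfrac12\mathbb{Z}^2$ happens exactly when $r+s\tau$ is a \emph{nontrivial critical point of $G$ itself}, and by Theorem C of Lin--Wang such points do exist for those $\tau$ with five critical points (e.g.\ near $\tau=e^{\pi i/3}$). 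Your parenthetical ``and, equivalently, $a\notin E_\tau[2]$'' conflates two different statements: $(r,s)\notin\tfrac12\mathbb{Z}^2$ is indeed equivalent to $a\notin E_\tau[2]$, but neither is equivalent to $Z_{r,s}(\tau)\neq 0$. The needed nonvanishing should instead be extracted from the critical-point equation itself: if $a$ is a critical point of $G_p$, then $\wp'(a)=-2Z_{r,s}(\tau)\bigl(\wp(a)-\wp(p)\bigr)$, and since $a\notin E_\tau[2]$ gives $\wp'(a)\neq 0$, this forces $Z_{r,s}(\tau)\neq 0$, after which you may divide and obtain \eqref{a+p1}; conversely, \eqref{a+p1} only makes sense when $Z_{r,s}(\tau)\neq 0$, and then the computation reverses. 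With this one-line repair (and the harmless observation, implicit also in the paper, that critical points of $G_p$ cannot occur at $\pm p$, so $\wp(a)\neq\wp(p)$ and the addition formula applies), your proof is complete and coincides with the paper's.
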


\begin{proof} Note from $(r,s)\in\mathbb{R}^2\setminus\frac12\mathbb{Z}^2$ that $a\notin E_{\tau}[2]$.
By \eqref{G_z}, we see that $a$ is a critical point of $G_p$ if and only if 
\begin{equation}\label{a+1p}\zeta(a+p)+\zeta(a-p)-2(r\eta_1+s\eta_2)=0.\end{equation}
Applying the addition formula of elliptic functions
$$\zeta(a+p)+\zeta(a-p)-2\zeta(a)=\frac{\wp'(a)}{\wp(a)-\wp(p)},$$
it is easy to see that \eqref{a+1p} is equivalent to \eqref{a+p1}. The rest assertions follows directly from Theorem \ref{theorem-1}.
\end{proof}

Now we are ready to prove Theorem \ref{main-thm2}.

\begin{proof}[Proof of Theorem \ref{main-thm2}]
Let $\tau\in i\mathbb{R}_{>0}$. First, we prove that there is $\varepsilon>0$ such that $\Omega_{\tau}^{\mathbf 0}\cap B(0, \varepsilon)=\emptyset$.
Assume by contradition that there exist $p_n\in\Omega_{\tau}^{\mathbf 0}$ such that $p_n\to 0$ as $n\to+\infty$. Then by Theorem \ref{thm4-1}, $G_{p_n}$ has a nontrivial critical point $a_n=r_n+s_n\tau\in E_\tau\setminus E_{\tau}[2]$, where $(r_n, s_n)\in\mathbb{R}^2\setminus\frac12\mathbb{Z}^2$. Up to a subsequence, we may assume $a_n\to a_{\infty}\in E_{\tau}$.
We claim that $a_{\infty}\neq 0$ in $E_{\tau}$.

Assume by contradiction that $a_{\infty}=0$ in $E_{\tau}$, by replacing $a_n$ with some element from $a_n+\mathbb{Z}+\mathbb{Z}\tau$, we may assume $a_n\to 0$ and so $r_n, s_n\to 0$. Then 
 it follows from the Laurent series of $\zeta(z)$ and $\wp(z)$ that
\[
\zeta(a_n)=\frac{1}{a_n}+O(|a_n|^{3}),\quad
\wp(a_n)=\frac{1}{a_n^{2}}+O(|a_n|^{2}),
\]%
\[
\wp^{\prime}(a_n)=\frac{-2}{a_n^{3}}+O(|a_n|).
\]
Inserting these into \eqref{a+p1}, we obtain
\begin{align*}
\wp(p_n)=\wp(a_n)+\frac{\wp^{\prime
}(a_n)}{2(\zeta(a_n)-r_n\eta_1-s_n\eta_2) }=-\frac{r_n\eta_1+s_n\eta_2}{a_n}+O(|a_n|^{2}).
\end{align*}
Note from $\operatorname{Im}\tau>0$ and $\mathbb{R}^2\ni (r_n, s_n)\to (0,0)$ that $\frac{r_n\eta_1+s_n\eta_2}{a_n}=\frac{r_n\eta_1+s_n\eta_2}{r_n+s_n\tau}$ is uniformly bounded, we obtain a contradiction with $\wp(p_n)\to \infty$ as $p_n\to 0$. 

Therefore, $a_\infty\neq 0$ in $E_{\tau}$. Then $a_\infty$ is a critical point of $G(z)$. Since $\tau\in i\mathbb{R}_{>0}$, it follows from Theorem \ref{thm-LW} that $a_\infty=\frac{\omega_k}{2}$ for some $k=1,2,3$. Therefore, $G_{p_n}(z)$ has two different critical points $a_n$ and $\frac{\omega_k}{2}$ that both converge to $\frac{\omega_k}{2}$ as $n\to\infty$. However, since Theorem \ref{thm-LW} says that $\frac{\omega_k}{2}$ is a non-degenerate critical point of $G(z)$, we conclude from the implicit function theorem that there exists $\delta>0$ small such that $G_p(z)$ has exactly one critical point (hence must be $\frac{\omega_k}{2}$) in a small neighborhood of $\frac{\omega_k}{2}$ for any $|p|<\delta$. Since $|p_n|<\delta$ for $n$ large, we obtain a contradiction.

Therefore, there exists $\varepsilon>0$ small such that $\Omega_{\tau}^{\mathbf 0}\cap B(0,\varepsilon)=\emptyset$. Finally, since $-(p-\frac{\omega_k}{2})=-p-\frac{\omega_k}{2}$ in $E_{\tau}$, we see that $u(z)$ is a solution of 
$$\Delta u+e^u=4\pi(\delta_p+\delta_{-p})\quad\text{on }E_{\tau}$$
if and only if $\tilde{u}(z):=u(z+\frac{\omega_k}{2})$ is a solution of 
$$\Delta u+e^u=4\pi(\delta_{p-\frac{\omega_k}{2}}+\delta_{-(p-\frac{\omega_k}{2})})\quad\text{on }E_{\tau}.$$
This implies $\Omega_{\tau}^{\mathbf 0}\cap \cup_{k=0}^3B(\frac{\omega_k}{2},\varepsilon)=\emptyset$.
The proof is complete.
\end{proof}

\subsection*{Acknowledgements}
Z. Chen was supported by National Key R\&D Program of China (Grant 2023YFA1010002) and NSFC (No. 1222109).


\begin{thebibliography}{99}                                                                                               %


\bibitem {Babich-Bordag}M.V. Babich and L.A. Bordag; \textit{Projective differential geometrical structure of the Painlev\'e equations}. J. Differ. Equ. \textbf{157} (1999), 452-485.

\bibitem {Bob-E}A. Bobenko and U. Eitner;\textit{ Bonnet surfaces and
Painlev\'{e} equations}. J. reine angew. Math. \textbf{499} (1998), 47-79.

\bibitem{B-M} H. Brezis and F. Merle; \textit{Uniform estimates and blow-up behavior for solutions of $-\Delta u =V(x)e^u$ in two dimensions}. Commun. Partial Differ. Equ. \textbf{16} (1991), 1223-1254.

\bibitem {CLMP}E. Caglioti, P. L. Lions, C. Marchioro and M. Pulvirenti;
\textit{A special class of stationary flows for two-dimensional Euler
equations: a statistical mechanics description.} Comm. Math. Phys.
\textbf{143} (1992), 501-525.



\bibitem {CLW} C.L. Chai, C.S. Lin and C.L. Wang; \textit{Mean field
equations, Hyperelliptic curves, and Modular forms: I}. Cambridge Journal of
Mathematics, \textbf{3} (2015), 127-274.



\bibitem {CL-1} C.C. Chen and C.S. Lin; \textit{Sharp estimates for solutions
of multi-bubbles in compact Riemann surfaces}. Comm. Pure Appl. Math.
\textbf{55} (2002), 728-771. 

\bibitem {CL-2} C.C. Chen and C.S. Lin; \textit{Topological degree for a mean
field equation on Riemann surfaces}. Comm. Pure Appl. Math. \textbf{56}
(2003), 1667-1727. 

\bibitem {CL-3} C.C. Chen and C.S. Lin; \textit{Mean field equation of
Liouville type with singular data: Topological degree}. Comm. Pure Appl. Math.
\textbf{68} (2015), 887-947. 


\bibitem {Chen-Kuo-Lin0}Z. Chen, T.J. Kuo and C.S. Lin; \textit{Hamiltonian
system for the elliptic form of Painlev\'{e} VI equation}. J. Math. Pures
Appl. \textbf{106} (2016), 546-581.



\bibitem {CKL1} Z. Chen, T.J. Kuo and C.S. Lin; \textit{The geometry of generalized Lam\'{e} equation, I}. J. Math. Pures Appl.  \textbf{127} (2019), 89-120.

\bibitem {CKL2} Z. Chen, T.J. Kuo and C.S. Lin; \textit{The geometry of generalized Lam\'{e} equation, II: Existence of pre-modular forms and application}. J. Math. Pures
Appl. \textbf{132} (2019), 251–272.

\bibitem{Chen-Kuo-Lin} Z. Chen, T.J. Kuo and C.S. Lin;
\textit{The geometry of generalized Lam\'{e} equation, III: one-to-one of the Riemann-Hilbert correspondence}. Pure Appl. Math. Q. \textbf{17} (2021), 1619–1668.

\bibitem {CL-AJM}Z. Chen and C.S. Lin; \textit{Sharp nonexistence results for curvature equations with four singular sources on rectangular tori}. Amer. J. Math. \textbf{142} (2020), 1269-1300.



\bibitem {Dubrovin-Mazzocco} B. Dubrovin and M. Mazzocco; \textit{Monodromy
of certain Painlev\'{e}-VI transcendents and reflection groups}. Invent. Math.
\textbf{141} (2000), 55-147.

\bibitem {EG-2015}A. Eremenko and A. Gabrielov; \textit{On metrics of curvature $1$ with four conic singularities on tori and on the sphere}.
Illinois J. Math. \textbf{59} (2015), 925-947.

\bibitem {EG}A. Eremenko and A. Gabrielov; \textit{Spherical Rectangles}.
Arnold Math. J. \textbf{2} (2016), 463-486.


\bibitem {Guzzetti} D. Guzzetti; \textit{The elliptic representation of the
general Painlev\'{e} VI equation}. Comm. Pure Appl. Math. \textbf{55} (2002),
1280-1363. 



\bibitem {Hit1} N. J. Hitchin; \textit{Twistor spaces, Einstein metrics and
isomonodromic deformations}. J. Differ. Geom. \textbf{42} (1995), no.1,
30-112.

\bibitem {GP} K. Iwasaki, H. Kimura, S. Shimomura and M. Yoshida;
\textit{From Gauss to Painlev\'{e}: A Modern Theory of Special Functions}.
Springer vol. E16, 1991.





\bibitem {LW} C.S. Lin and C.L. Wang; \textit{Elliptic functions, Green
functions and the mean field equations on tori}. Ann. Math. \textbf{172}
(2010), no.2, 911-954. 

\bibitem {LW2}C.S. Lin and C.L. Wang; \textit{Mean field
equations, Hyperelliptic curves, and Modular forms: II}.  J. \'{E}c. polytech. Math.
\textbf{4} (2017), 557-593.

\bibitem {LY}C.S. Lin and S. Yan; \textit{Existence of bubbling solutions for
Chern-Simons model on a torus.} Arch. Ration. Mech. Anal. \textbf{207} (2013), 353-392.

\bibitem {Lisovyy-Tykhyy} O. Lisovyy and Y. Tykhyy; \textit{Algebraic
solutions of the sixth Painlev\'{e} equation}. J. Geom. Phys. \textbf{85}
(2014), 124-163. 



\bibitem{MR} A. Malchiodi and D. Ruiz; \textit{New improved Moser-Trudinger inequalities and singular Liouville equations on compact surfaces}.
Geom. Funct. Anal. \textbf{21} (2011), 1196-1217.

\bibitem {NT1}M. Nolasco and G. Tarantello; \textit{Double vortex condensates
in the Chern-Simons-Higgs theory.} Calc. Var. PDE. \textbf{9} (1999), 31-94.



\bibitem {Okamoto1} K. Okamoto; \textit{Studies on the Painlev\'{e}
equations. I. Sixth Painlev\'{e} equation} $P_{VI}$. Ann. Mat. Pura Appl.
\textbf{146} (1986), 337-381. 









\bibitem {Takemura}K. Takemura; \textit{The Hermite-Krichever Ansatz for
Fuchsian equations with applications to the sixth Painlev\'{e} equation and to
finite gap potentials}. Math. Z. \textbf{263} (2009), 149-194.




\end{thebibliography}
\end{document}